 \newtheorem{theorem}{Theorem}[section]
 \newtheorem{prop}[theorem]{Proposition}
 \newtheorem{defn}[theorem]{Definition}{\rm}
 \newtheorem{rem}[theorem]{Remark}
 \newtheorem{ex}{Example}
\numberwithin{equation}{section}
\def\x{\mathbf{x}}
\def\P{\mathbf{P}}
\def\R{\mathbb{R}}
\def\N{\mathbb{N}}
\def\K{\mathbf{K}}
\def\Q{\mathbf{Q}}
\def\M{\mathbf{M}}
\def\P{\mathbf{P}}
\def\A{\mathbf{A}}
\def\B{\mathbf{B}}
\def\X{\mathbf{X}}
\def\z{\mathbf{z}}
\def\y{\mathbf{y}}
\def\y{\mathbf{y}}
\def\om{\mathbf{\Omega}}
\title{Lebesgue and Gaussian measure of unions of basic semi-algebraic sets}
\author{Jean B. Lasserre}
\thanks{Jean B Lasserre: 7 Avenue du Colonel Roche, BP 54200, 31031 Toulouse cedex 4, France. \\
Tel: +66561336415; Fax: +33561336936; Email: {\tt lasserre@laas.fr}}
\address{LAAS-CNRS and Institute of Mathematics\\
University of Toulouse\\
LAAS, 7 avenue du Colonel Roche\\
31077 Toulouse C\'edex 4, France}
\email{lasserre@laas.fr}
\author{Youssouf Emin}
\thanks{Youssouf Emin: Ecole Polytechnique, 91 128 Palaiseau, Cedex, France\\
Email: {\tt youssouf.emin@polytechnique.edu}}
\address{Ecole Polytechnique\\
91 128 Palaiseau Cedex, France}
\email{youssouf.emin@polytechnique.edu}
\begin{document}
\maketitle

\section*{Abstract} 

Given a finite Borel measure $\mu$ on $\R^n$ and basic semi-algebraic sets $\om_i\subset\R^n$, $i=1,\ldots,p$, we provide a systematic numerical scheme 
to approximate as closely as desired $\mu(\bigcup_i\om_i)$, when all moments of $\mu$ are available (and finite). More precisely, we provide a hierarchy
of semidefinite programs whose associated sequence of optimal values is monotone 
and converges to the desired value from above. The same methodology applied to
the complement $\R^n\setminus (\bigcup_i\om_i)$ provides a monotone sequence that converges to the desired value from below.
When $\mu$ is the Lebesgue measure we assume that $\om:=\bigcup_i\om_i$ is compact and contained in a known box $\B:=[-a,a]^n$ and in this case
the complement is taken to be $\B\setminus\om$.
In fact, not only $\mu(\om)$ but also every finite vector of moments of $\mu_\om$ (the restriction of $\mu$ on $\om$) 
can be approximated as closely as desired, and so permits to approximate the integral on $\om$ of any given polynomial. \\
{\bf Keywords:} Lebesgue and Gaussian measure; semi-algebraic sets; moment problem and sums of squares; semidefinite programming; convex optimization\\
{\bf MSC:} 44A60 28A75 90C05 90C22

\section{Introduction}

Given a set $\om\subset\R^n$ and a finite Borel measure $\mu$ on $\R^n$, computing $\mu(\om)$ is a very challenging problem.
In fact even approximating the Lebesgue volume of a convex body $\om\subset\R^n$ (e.g. a polytope) is difficult; see e.g. 
Bollob\'as \cite{Bollobas} and Dyer and Frieze \cite{dyer1}.
However, in the latter case some efficient (non deterministic) algorithms with probabilistic guarantees are available and for more details the interested reader is referred to
e.g. Dyer et al. \cite{dyer2}, Cousins and Vempala \cite{cousins1,cousins2} and the references therein.  

In the non convex case no such algorithm is  available and one is left with approximating 
$\mu(\om)$ with Monte Carlo (or Quasi-Monte-Carlo) type methods as described in e.g. Niederreiter \cite{niederreiter}. That is, one first generates a sample of $N$ points in $\B$ following the distribution 
$\mu$ on $\B$ and then one {\it counts} the number $N_\K$ of points that fall into $\om$. This realization of the random variable $N_\K/N$ provides an estimate of $\mu(\om)$ but by no means an upper bound or a lower bound on $\mu(\om)$. Of course this method is quite fast, especially is small dimension.

Yet, as $\mu(\om)$  is indeed very difficult to compute exactly, a less ambitious but still useful goal would be to provide {\it upper} and/or {\it lower bounds} on $\mu(\om)$.
Even better, a  converging sequence of upper (or lower) bounds would be highly desirable.
This is the strategy proposed in Henrion et al. \cite{sirev} when $\om$ is a compact basic semi-algebraic set and $\mu$ is the Lebesgue measure.
In \cite{sirev} the authors have provided a (deterministic) numerical scheme which yields
a monotone sequence of upper bounds converging to $\mu(\om)$. It consists of solving a hierarchy of semidefinite programs of increasing size. 
By repeating the procedure but now with the complement $\B\setminus\om$, one also obtains a monotone sequence of 
lower bounds converging to $\mu(\om)$. However, even on typical $2$ or $3$-dimensional examples, the convergence was rather slow and the authors proposed a slight modification which turned out to be much more efficient; the convergence was much faster but unfortunately not monotone anymore.

\subsection*{Contribution}
The purpose of this paper is to introduce a deterministic method to approximate (in principle as closely as desired)
the measure $\mu(\om)$ of the union  $\om=\bigcup_i\om_i$ of finitely many basic semi-algebraic set.
The finite Borel measure $\mu$ is any  measure whose all moments are finite, e.g., the Lebesgue measure 
when $\om$ is compact, the Gausssian measure $d\mu=\exp(-\Vert\x\Vert^2)d\x$ for non-compact set $\om$.

The method is similar in spirit to the one in \cite{sirev} for a compact basic semi-algebraic set and the one in \cite{lass-gaussian}
for computing Gaussian measures of basic closed semi-algebraic sets (not necessarily compact), but with two important novelties.

$\bullet$ In contrast to \cite{sirev} and \cite{lass-gaussian}, we consider a finite {\it union} $\om$ of (non disjoint) basic semi-algebraic sets, which complicates matters significantly.

$\bullet$ We include a technique to accelerate the convergence different from the one described in \cite{sirev}. Indeed
in contrast to \cite{sirev}, it has the highly desirable feature to maintain the {\it monotone} convergence to $\mu(\om)$ which is essential if one wishes to obtain upper and lower bounds. It consists of
using moments constraints coming from a particular application of Stokes' theorem.

 In fact this numerical scheme  allows to approximate  not only $\mu(\om)$ but also
any fixed finite sequence of moments of the measure $\mu_\om$ (where $\mu_\om$
is the restriction of $\mu$ to $\om$). 
\begin{rem}
{\rm One might invoke the inclusion-exclusion principle which states that
\begin{equation}
\label{toto}
\mu (\bigcup_{i=1}^p \om_i) = \sum_{j=1}^p (-1)^{j+1} \sum_{1 \le i_1 < ... < i_j \le p} \mu (\om_{i_1}\cap ... \cap \om_{i_j}),
\end{equation}
so that in principle it suffices to compute (or approximate) $\mu (\om_{i_1}\cap ... \cap \om_{i_j})$ for all possible 
intersections of the $\om_j$'s, e.g.  by the approach of \cite{sirev} or \cite{lass-gaussian}. But this approach has two major drawbacks.
First there are possibly $2^p$ such sets  and secondly,
to compute an upper bound one has to compute an upper bound 
for  such intersections with an odd number of elementary sets $\om_{i_j}$, and a lower bound 
for  such intersections with an even number of elementary sets. The latter lower bound in turn is obtained 
by computing an upper bound for the complement. This makes the whole procedure tedious and complicated.
Finally, Bonferroni's inequalities also provide a (finite) sequence of upper and lower bounds on $\mu(\om)$
but computing those bounds involves sums similar to the right-hand-side of (\ref{toto}),
hence with the same drawbacks just mentioned. Our proposed technique is direct with no
partial computation on intersections of elementary sets $\om_{i_j}$.
}\end{rem}

Of course, the technique described in this paper is computationally expensive. In particular, its applicability  is limited by the 
performance of the state-of-the-art semidefinite solvers because the size of the semidefinite programs
increases fast with the rank in the hierarchy. Therefore it makes 
its application limited to small dimensional problems ($n\leq 3,4$). For higher dimensions
only a few steps in the hierarchy can be implemented and therefore only upper and lower bounds (possibly crude) are expected. 
But the reader should keep in mind that the problem is very difficult and to the best of our knowledge 
we are not aware of an algorithm (at least at this level of generality) which provides certified upper and lower bounds
with such convergence properties (even for convex sets and in particular for non compact sets $\om$). In our opinion 
this methodology should be viewed as complementary to (rather than competing with) probabilistic methods.

\section{Notation, definitions and preliminary results}
\subsection{Notation and definitions}
Let $\R[\x]$ be the ring of polynomials in the variables $\x=(x_1,\ldots,x_n)$. Denote by $\R[\x]_d\subset\R[\x]$ the vector space of polynomials of degree at most $d$, which has dimension $s(d):=\binom{n+d}{d}$, with e.g., the usual canonical basis $(\x^\gamma)_{\gamma\in\N^n_d}$ of monomials, where $\N^n_d := \{\gamma\in\N^n \,:\, \vert\gamma\vert\leq d\}$, $\N$ is the set of natural numbers including $0$ and $\vert\gamma\vert := \sum_{i=1}^n\gamma_i$.
Also, denote by $\Sigma[\x]\subset\R[\x]$ (resp. $\Sigma[\x]_d\subset\R[\x]_{2d}$) the cone of sums of squares (s.o.s.) polynomials (resp. s.o.s. polynomials of degree at most $2d$). 
If $f\in\R[\x]_d$, we write $f(\x)=\sum_{\gamma\in\N^n_d}f_\gamma \x^\gamma$ in the canonical basis and denote by $\boldsymbol{f}=(f_\gamma)_\gamma\in\R^{s(d)}$ its vector of coefficients. 
Finally, let $S^n$ denote the space of $n\times n$ real symmetric matrices, with inner product $\langle \A,\B\rangle ={\rm trace}\,\A\B$. We use the notation $\A\succeq 0$ (resp. $\A\succ 0$) to denote that $\A$ is positive semidefinite (definite). With $g_0:=1$, the quadratic module $Q(g_1,\ldots,g_m)\subset\R[\x]$ generated by polynomials $g_1,\ldots,g_m$, is defined by:
\[
Q(g_1,\ldots,g_m)\,:=\,\left\{\sum_{j=0}^m\sigma_j\,g_j\::\:\sigma_j\in\Sigma[\x]\,\right\}.
\]
\begin{defn}[Archimedean assumption]
\label{definition-1}
The quadratic module $Q(g_1,\ldots,g_m)$ is {\it Archimedean} if there exists $M>0$ such that the quadratic polynomial
$\x\mapsto g_{m+1}:=M-\Vert\x\Vert^2$ belongs to $Q(g_1,\ldots,g_m)$. Notice that
$g_{m+1}\in Q(g_1,\ldots,g_m)$ is an {\it algebraic certificate} that the set $\K:=\{\x: g_j(\x)\geq0,\:j=1,\ldots,m\}$ is compact. 
\end{defn}
If the set  $\K:\{\x:g_j(\x)\geq0,\:j=1,\ldots,m\}$ is compact  then $\Vert\x\Vert^2\leq M$ for some $M>0$,
and one may always include the redundant quadratic constraint $\theta(\x):=M-\Vert\x\Vert^2\geq0$ in the definition of $\K$ without changing $\K$.
Then the quadratic module $Q(g_1,\ldots,g_{m+1})$ is Archimidean.

\subsection*{Moment and localizing matrix}
With a real sequence $\y=(y_{\gamma})_{\gamma\in\N^n_d}$, one may associate the (Riesz) linear functional $L_{\y}:\R[\x]_d\to\R$ defined by
\[f\,\left(=\sum_\gamma f_\gamma\,\x^\gamma\right)\quad \mapsto L_{\y}(f)\,:=\,\sum_\gamma f_\gamma\,y_\gamma,\]
Denote by $\M_d(\y)$ the {\it moment} matrix associated with $\y$, the real symmetric matrix with rows and columns indexed in the basis of monomials $(\x^\gamma)_{\gamma\in \N^n_{d}}$, and with entries:
\[
\M_d(\y)(\alpha,\beta)\,:=\,L_{\y}(\x^{\alpha+\beta})\,=\,y_{\alpha+\beta},\qquad \forall\,\alpha,\beta\in\N^n_{d}.
\]
Next, given $g\in\R[\x]$, denote by $\M_d(g\,\y)$ the {\it localizing} moment matrix associated with $\y$ and $g$, the real symmetric matrix with rows and columns indexed in the basis of monomials $(\x^\gamma)_{\gamma\in \N^n_{d}}$, and with entries:
\[
\M_d(g\,\y)(\alpha,\beta)\,:=\,L_{\y}(g(\x)\,\x^{\alpha+\beta})\,=\,\sum_{\gamma}g_\gamma\,y_{\alpha+\beta+\gamma},\qquad \forall\,\alpha,\beta\in\N^n_{d}.
\]
If $\y=(y_{\gamma})_{\gamma\in\N^n}$ is the sequence of moments of some Borel measure $\mu$ on $\R^n$ then $\M_d(\y)\succeq0$ for all $d\in\N$. 
However the converse is not true in general and it is related to the well-known fact that there are positive polynomials that are not sums of squares.
Similarly, if the support of $\mu$ is contained in $\{\x:g(\x)\geq0\}$ then $\M_d(g\,\y)\succeq0$ for all $d\in\N$.
For more details the interested reader is referred to e.g. \cite[Chapter 3]{lass-book-icp}.

Given a Borel set $\om\subset\R^n$ let $\mathcal{M}(\om)$ be the space of finite {\it signed} Borel measures on $\om$ and let
$\mathcal{M}(\om)_+\subset\mathcal{M}(\om)$ be the convex cone of finite Borel measures on $\om$.

\subsection{The measure of a basic semi-algebraic set}
\label{sirev}
Let $\B,\K\subset\R^n$ with $\B\supset\K$ and let $\mu$ be a finite Borel measure 
whose support is $\B$.  (Typically $\mu$ is the Lebesgue measure on a box $\B$
and one wishes to compute the Lebesgue volume ${\rm vol}(\K)$; alternatively $\B=\R^n$, 
$\mu$ is the Gaussian measure $d\mu=\exp(-\Vert\x\Vert^2)d\x$
and one wishes to compute $\mu(\K)$.)

\subsection*{An infinite-dimensional linear program $\P$} 

Let $f\in\mathbb{R}[\x]$ be positive almost everywhere on $\K$ and consider the following infinite-dimensional \textbf{LP} problem :
\begin{center}
\begin{align}
\P : \quad f^*=\underset{\phi}{\mbox{sup  }} \Big \{\int_\K f\, d\phi : \lambda \le \mu; \mbox{ } \phi \in \mathcal{M}(\K)_+ \Big \} \label{LPINF_P}
\end{align}
\end{center}
\begin{theorem}[\cite{sirev}]
\label{LPINF_THM}
The measure $\phi^*=\mu_\K$ (the restriction of $\mu$ to $\K$) is the unique optimal solution of $\P$. In particular, if $f(\x)=1$ for all $\x$, then 
$f^*=\mu(\K)$.
\end{theorem}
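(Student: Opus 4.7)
The strategy is to show that any feasible $\phi\in\mathcal{M}(\K)_+$ with $\phi\le \mu$ is automatically dominated by $\mu_\K$ as a measure, and then to leverage the strict positivity of $f$ almost everywhere on $\K$ to turn this measure inequality into a strict inequality at the level of integrals unless $\phi=\mu_\K$. Feasibility of $\mu_\K$ itself is immediate: for every Borel $A\subseteq \R^n$, one has $\mu_\K(A)=\mu(A\cap\K)\le\mu(A)$, so $\mu_\K\le\mu$, and plainly $\mu_\K\in\mathcal{M}(\K)_+$.

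To establish optimality I would fix an arbitrary feasible $\phi$. Viewing it as a Borel measure on $\R^n$ concentrated on $\K$, for every Borel $A\subseteq \R^n$ we have $\phi(A)=\phi(A\cap\K)\le\mu(A\cap\K)=\mu_\K(A)$. Hence $\nu:=\mu_\K-\phi$ is a non-negative finite Borel measure on $\K$, in particular $\nu\ll\mu$. Set $N:=\{\x\in\K : f(\x)\le 0\}$; by hypothesis $\mu(N)=0$, whence $\mu_\K(N)=\phi(N)=\nu(N)=0$. Consequently
\[
\int_\K f\,d\mu_\K-\int_\K f\,d\phi \;=\;\int_\K f\,d\nu\;=\;\int_{\K\setminus N} f\,d\nu\;\ge\;0,
\]
since $f>0$ on $\K\setminus N$ and $\nu$ is non-negative. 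This proves $\mu_\K$ is optimal and that $f^*=\int_\K f\,d\mu_\K$; specialising to $f\equiv 1$ yields $f^*=\mu(\K)$.

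For uniqueness, suppose $\phi$ is optimal with $\phi\neq\mu_\K$. Then $\nu=\mu_\K-\phi$ is a non-zero non-negative measure with $\nu(N)=0$, so $\nu(\K\setminus N)>0$; since $f>0$ there, $\int_{\K\setminus N} f\,d\nu>0$, contradicting the assumed equality of objective values. The only mildly delicate point of the whole argument is making the constraint $\phi\le\mu$ precise as a set-wise inequality between non-negative Borel measures (after identifying $\phi\in\mathcal{M}(\K)_+$ with its extension by zero to $\R^n$) and exploiting that this forces absolute continuity of $\nu$ with respect to $\mu$, which in turn ensures that the $\mu$-null set $N$ where $f$ could have the wrong sign is also null for $\phi$, $\mu_\K$ and $\nu$. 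Once this identification is in place, the remainder is a clean application of positivity of $f$ off a $\mu$-null set.
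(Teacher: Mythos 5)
Your proof is correct: the setwise domination $\phi\le\mu_\K$ for every feasible $\phi$ (via $\phi(A)=\phi(A\cap\K)\le\mu(A\cap\K)$), combined with $f>0$ off a $\mu$-null subset of $\K$, gives both optimality of $\mu_\K$ and uniqueness, and you correctly read the constraint ``$\lambda\le\mu$'' in (\ref{LPINF_P}) as $\phi\le\mu$. The paper states Theorem \ref{LPINF_THM} without proof, citing \cite{sirev}; your domination-plus-a.e.-positivity argument is the standard one used there, and it is the same mechanism the present paper relies on when it reduces uniqueness in Theorem \ref{OPTSOLQ} back to Theorem \ref{LPINF_THM}.
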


\subsection*{Semidefinite relaxations}
Of course problem $\P$ in (\ref{LPINF_THM}) is infinite-dimensional and cannot be solved directly. However,
when $\K$ is a basic semi-algebraic set then Theorem \ref{LPINF_THM} can be further exploited. 
So given $(g_j)_{j=1}^m\subset\R[\x]$, let $\K\subset\R^n$ be the basic semi-algebraic set
\begin{equation}
\label{setk}
\K\,=\,\{\,\x\in\R^n:\:g_j(\x)\,\geq\,0,\quad j=1,\ldots,m\,\},
\end{equation}
assumed to nonempty and compact. Let $\B\supset\K$ and let $\mu$ be a finite Borel measure whose all moments 
$\z=(z_\alpha)$ with 
\[z_\alpha\,=\,\int_\B\x^\alpha\,d\mu(\x),\qquad\alpha\in\N^n,\]
are available in closed form or can be computed.

To approximate $f^*$ as closely as desired in \cite{sirev} the authors propose to solve the following hierarchy $(\Q_d)_{d\in\N}$ of 
semidefinite programs\footnote{A semidefinite program (SDP) is a conic convex optimization problem
with a remarkable modeling power. It can be solved efficiently 
(in time polynomial in its input size) up to arbitrary precision fixed in advance; see e.g. Anjos and Lasserre \cite{handbook}} indexed by $d\in\N$:

\begin{equation}
\label{primal-sdp}
\Q_d:\begin{array}{rl}
\rho_d=\displaystyle\sup_\y & \{\,L_{\y}(f): \\
\mbox{s.t.}& \M_d(\y)\,\succeq0; \:\M_d(\z-\y) \succeq 0\\
& \M_{d-r_{j}}(g_{j}\,\y) \succeq 0, j = 1,\ldots,m\}.
\end{array}\end{equation}
Observe that $\Q_d$ is a relaxation of $P$, and so $\rho_d\geq\mu(\K)$ for all $d$. In addition,
the sequence $(\rho_d)_{d\in\N}$ is monotone non increasing.
The dual of (\ref{primal-sdp})
is the semidefinite program:
\begin{equation}
\label{dual-sdp}
\Q^*_d:\quad \rho^*_d=\displaystyle\inf_{p\in\R[\x]_{2d}} \, \{\,\int_\B p\,d\mu:\: p-f\geq0\mbox{ on $\K$};\quad p\in\Sigma[\x]_d\,\},
\end{equation}
and by weak duality, $\rho_d\leq\rho^*_d\leq f^*$ for all $d$.

\begin{theorem}[\cite{sirev}]
Assume that $Q(g_1,\ldots,g_m)$ is Archimedean. Then $\rho_d\to f^*$ as $d\to\infty$. If $\K$ has nonempty interior then 
$\rho^*_d=\rho_d$ and (\ref{dual-sdp}) has an optimal solution $p^*\in\R[\x]_{2d}$.
\end{theorem}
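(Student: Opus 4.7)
The plan is to prove the convergence $\rho_d\to f^*$ and the strong-duality statement separately, both by standard moment--sum-of-squares machinery.

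For $\rho_d\to f^*$, I would first dispose of the easy direction: the moment sequence $\y$ of $\phi^*=\mu_\K$ (the unique optimum of $\P$ by Theorem~\ref{LPINF_THM}) satisfies every PSD constraint of every $\Q_d$, giving $\rho_d\geq f^*$; monotonicity $\rho_{d+1}\leq \rho_d$ follows by truncating a $\Q_{d+1}$-feasible sequence to degree $2d$. Hence $\rho_d\searrow \rho^\infty\geq f^*$. For the reverse inequality I take near-optimal $\y^{(d)}$ for $\Q_d$ and bound its entries uniformly in $d$: the diagonal of $\M_d(\z-\y^{(d)})\succeq 0$ forces $0\leq y^{(d)}_{2\alpha}\leq z_{2\alpha}$, and the moment-matrix inequality $(\M_d(\y))_{\alpha,\beta}^2\leq (\M_d(\y))_{\alpha,\alpha}(\M_d(\y))_{\beta,\beta}$ controls off-diagonal entries. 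A standard diagonal-extraction argument then produces a pointwise limit $y^*_\alpha=\lim_k y^{(d_k)}_\alpha$ for every $\alpha\in\N^n$, and entrywise limits of PSD matrices are PSD, so $\M_d(\y^*),\M_d(g_j\y^*),\M_d(\z-\y^*)\succeq 0$ for all $d$ and $j$.

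Putinar's Positivstellensatz (applicable since $Q(g_1,\dots,g_m)$ is Archimedean) then yields a unique $\phi^*\in\mathcal{M}(\K)_+$ with moments $\y^*$. Compactness of $\mathrm{supp}(\mu)$ makes the moments $z_\alpha$ grow geometrically, so the dominated sequence $\z-\y^*$ satisfies Carleman's condition and, being PSD at every degree, is itself the moment sequence of some $\nu\in\mathcal{M}(\B)_+$. From $\mu=\phi^*+\nu$ I conclude $\phi^*\leq\mu$, so $\phi^*$ is feasible for $\P$ and
\[
\rho^\infty\,=\,\lim_k L_{\y^{(d_k)}}(f)\,=\,L_{\y^*}(f)\,=\,\int_\K f\,d\phi^*\,\leq\,f^*,
\]
which, combined with $\rho^\infty\geq f^*$, gives the desired convergence.

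For strong duality, I would verify Slater's condition for $\Q_d$ under the nonempty-interior hypothesis. Choose an open ball $B\subset \mathrm{int}\,\K$ on which each $g_j$ is bounded below by a positive constant, and let $\y$ be the moments of $\epsilon\cdot\mu|_B$ for sufficiently small $\epsilon>0$. Full-dimensional support of $\epsilon\,\mu|_B$ gives $\M_d(\y)\succ 0$; strict positivity of each $g_j$ on $\mathrm{supp}(\mu|_B)$ gives $\M_{d-r_j}(g_j\y)\succ 0$; and $\mu-\epsilon\,\mu|_B$ is a positive measure with full-dimensional support on $\B$, hence $\M_d(\z-\y)\succ 0$. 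Standard SDP duality then yields $\rho_d=\rho^*_d$ and attainment of a dual minimizer $p^*\in\R[\x]_{2d}$. The main obstacle I anticipate is the step identifying $\z-\y^*$ as the moments of a genuine nonnegative measure, since PSD truncated moment matrices alone are not sufficient in the multidimensional setting; leveraging compact support of $\mu$ via Carleman-type bounds is the clean workaround. Alternatively, one may enlarge the SDP with explicit localizing constraints for a box containing $\B$ and apply Putinar to both $\y^*$ and $\z-\y^*$.
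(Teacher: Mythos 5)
Your proposal is correct and follows essentially the same route that the paper itself uses for the union analogues of this result (the theorem here is only quoted from \cite{sirev} without proof): uniform entrywise bounds extracted from the diagonal of $\M_d(\z-\y)\succeq 0$ and $\M_d(\y)\succeq0$, a compactness/pointwise-limit argument (the paper uses Banach--Alaoglu where you use diagonal extraction), Putinar's Positivstellensatz to obtain a representing measure on $\K$, and Slater's condition under the nonempty-interior hypothesis to get $\rho_d=\rho^*_d$ with dual attainment, exactly as in the proofs of Theorem~\ref{thm:SDP} and Proposition~\ref{slater}. Your explicit justification that $\z-\y^*$ is itself a moment sequence (via Carleman/determinacy, giving $\phi^*\le\mu$) is a step the paper asserts without detail, and handling it as you do is the right way to close it.
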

So when $f=1$, $(\rho_d)_{d\in\N}$ provides us with a monotone sequence of upper bounds on $f^*=\mu(\K)$. Unfortunately
the convergence is rather slow as observed on several numerical examples. This is because
in the dual (\ref{dual-sdp}) the optimal solution $p^*\in\R[\x]_{2d}$ tries to approximate from above (in $L_1(\B,\mu)$)
the discontinuous function $1_\K$, which implies an annoying Gibb's phenomenon\footnote{The Gibbs' phenomenon appears at a jump discontinuity when one approximates a piecewise $C^1$ function with a continuous function, e.g. by its Fourier series.}. To remedy this problem
the authors in \cite{sirev} propose to use a polynomial $f$, nonnegative on $\K$ and which vanishes on $\partial \K$.
In this case the convergence $\rho_d\to \int_\K f\,d\mu$ as $d\to\infty$ is still monotone and if
$\y^d=(y^d_\alpha)_{\alpha\in\N^n_{2d}}$ denotes an optimal solution of (\ref{primal-sdp}) then $y^d_0\to\mu(\K)$ as $d\to\infty$.
However, while faster than with $f=1$, the latter convergence of $y^d_0$ to $\mu(\K)$ is not monotone anymore, a rather annoying feature
which prevents from obtaining a non increasing sequence of upper bounds.

\section{Main result}

\subsection*{The context} 
Let $\B\subset\R^n$ be a box, and for every $i=1,...,p$, let $\om_i := \{\,\x\in\R^n :g_{ij}(x) \ge 0, j=1,\ldots,m_i\}$,
for some polynomials $(g_{ij})\subset\R[\x]$.
Assume that $\B$ has been chosen so as to satisfy:
\begin{align}
\om := \bigcup_{i=1}^p \om_i \subset \B.  \label{SEMIALGDEF}
\end{align} 
The goal is to provide a numerical scheme to approximate as closely as desired the Lebesgue volume
$\mu(\K)$. (We will see how to adapt the methodology to also approximate
as closely as desired $\mu(\K)$ when $\K$ is not necessarily compact and $\mu$ is a Gaussian measure.)
One possible approach described below is to use the powerful inclusion-exclusion principle and/or
the associated Bonferroni inequalities.

\subsection{The inclusion-exclusion principle and Bonferroni Inequalities}
Let :
\[S_k \,:=\, \sum_{1 \le i_1 < ... < i_k \le p} \mu (\om_{i_1}\cap ... \cap \om_{i_k}),\quad k=1,\ldots,p.\]
By the inclusion-exclusion principle,

\begin{align*}
\mu (\bigcup_{k=1}^p \om_k) = \sum_{k=1}^p (-1)^{k+1}S_k,
\end{align*}
which allows us to work with intersections of the $\om_k$'s only. In addition, the Bonferroni inequalities state that 

\begin{align*}
\mu (\bigcup_{i=1}^p \om_i) &\le \sum_{j=1}^{2k+1} (-1)^{j+1}S_j &\forall 2k+1 \le p\\
& \ge \sum_{j=1}^{2k} (-1)^{j+1}S_j &\forall 2k \le p
\end{align*}
which provides sequences of (increasingly tighter) upper and lower bounds.\\

Therefore to compute $\mu(\om)$ we only have to compute the measure of the intersection 
$\Theta_{i_1,\ldots,i_k}:=\displaystyle\bigcap_{j=1,\ldots,k}\om_{i_j}$, for all $1 \le i_1 < ... < i_k \le p$. 
Notice that there are $2^p$ such sets.
As each
$\Theta_{i_1,\ldots,i_k}\subset\B$ is a compact basic semi-algebraic set, one may apply the methodology described in \S\ref{sirev},
to obtain a sequence $(\rho^{(i_1,\ldots,i_k)}_d)_{d\in\N}$ which converges to $\mu(\Theta_{i_1,\ldots,i_k})$ as $d\to\infty$, and therefore
\[\lim_{d\to\infty}\left(\sum_{i=1}^p(-1)^{k+1}\sum_{1\leq i_1<\ldots<i_k\leq p}\rho_d^{i_1,\ldots,i_k}\right)\,=\,\mu(\om).\]
Notice that the convergence is not monotone non increasing even if one solves (\ref{primal-sdp}) with $f=1$ because 
we sum up negative and positive terms. To maintain the monotone convergence (when $f=1$)
it suffices to compute a lower bound on the complement $\B\setminus\Theta_{i_1,\ldots,i_k}$ when $k$ is even. However
as already mentioned the convergence is expected to be rather slow. 

To accelerate the convergence one may use $f=\prod_{j=1}^k\prod_{\ell=1}^{m_{i_j}}g_{i_j\ell}$ when one solves (\ref{primal-sdp})
with $\om=\Theta_{i_1,\ldots,i_k}$ as $f\geq0$ on $\Theta_{i_1,\ldots,i_k}$ and 
$f=0$ on $\partial\Theta_{i_1,\ldots,i_k}$. 
But then the convergence 
\[\lim_{d\to\infty}\left(\sum_{i=1}^p(-1)^{k+1}\sum_{1\leq i_1<\ldots<i_k\leq p}y_{d,0}^{i_1,\ldots,i_k}\right)\,=\,\mu(\om),\]
(where $\y^{i_1,\ldots,i_k}_d=y^{i_1,\ldots,i_k}_{d,\alpha}$ is an optimal solution of (\ref{primal-sdp}) with $\om=\Theta_{i_1,\ldots,i_k}$) is not monotone anymore.

\subsection{A direct approach}

In this section we describe a direct approach with two distinguishing features:
\begin{itemize}
\item It does not use the inclusion-exclusion principle and the need to approximate
$\mu(\bigcap_{j=1}^k\om_{i_j})$ for all $2^p$ such sets.
\item The convergence to $\mu(\om)$ (and also to $\mu(\B\setminus\om))$ is monotone non increasing, that is, we
can compute two sequences $(\overline{\omega}_d)_{d\in\N}$ and $(\underline{\omega}_d)_{d\in\N}$ such that:
\[\underline{\omega}_d\,\leq\,\mu(\om)\,\leq\,\overline{\omega}_d,\quad d\in\N;\quad
\mu(\om)\,=\,\lim_{d\to\infty}\underline{\omega}_d\,=\,\lim_{d\to\infty}\overline{\omega}_d.\]
\end{itemize} 

Recall that any finite number of moments
\begin{center} 
\begin{align*}
\mu_{\alpha} = \int_{\mathbb{B}} \textbf{x}^\alpha d\mu(\textbf{x}),\quad\alpha\in\N^n,
\end{align*}  
\end{center}
are either available in closed-form or can be obtained numerically.

\subsection*{A multi infinite-dimensional linear program $\Q$}
As in \S \ref{sirev} we first introduce an infinite-dimensional \textbf{LP} problem $\Q$ whose unique optimal solution is the restriction of $\mu$ on $\om$ (and whose dual has a clear interpretation). 

Let $f\in\mathbb{R}[\x]$ be positive almost everywhere on $\om$ and consider the following infinite-dimensional \textbf{LP} problem :

\begin{equation}
\label{LPINF_Q}
\Q : \quad f^*=\sup_{\phi_1,\ldots,\phi_p}
\Big \{\sum_{i=1}^p\int_{\om_i} f d\phi_i : \sum_{i=1}^p \phi_i \le \mu; \mbox{ } \phi_i \in \mathcal{M}(\om_i)_+,\:i=1,\ldots,p\Big \}. 
\end{equation}

\begin{theorem} \label{OPTSOLQ}
Problem $\Q$ has an optimal solution $(\phi^*_1,\ldots,\phi^*_p)$ and 
every optimal solution satisfies $\sum_{i=1}^p \phi^*_i = \mu_\om$, where
$\mu_\om$ is the restriction of $\mu$ to $\om$.
\end{theorem}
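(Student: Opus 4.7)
The plan is to reduce problem $\Q$ to the single-measure problem $\P$ of Theorem \ref{LPINF_THM} with $\K:=\om$, and then invoke that theorem directly. The key observation is that the objective of $\Q$ depends on the tuple $(\phi_i)$ only through the sum $\nu:=\sum_{i=1}^p\phi_i$, and that the feasible sets of the two problems are in natural correspondence via this summation.

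For the reduction, take any feasible tuple $(\phi_1,\ldots,\phi_p)$ of $\Q$ and set $\nu:=\sum_{i=1}^p\phi_i$. Each $\phi_i$ is a positive Borel measure supported in $\om_i$, so $\nu$ is supported in $\om$; moreover $\nu\le\mu$ by the constraint of $\Q$. Because $\phi_i$ is concentrated on $\om_i$,
\[
\sum_{i=1}^p\int_{\om_i}f\,d\phi_i \,=\,\sum_{i=1}^p\int_{\R^n}f\,d\phi_i \,=\,\int_{\R^n}f\,d\nu,
\]
so the $\Q$-objective at $(\phi_i)$ equals the $\P$-objective at $\nu$. Conversely, given any feasible $\nu$ for $\P$ (with $\K=\om$), define the Borel partition $A_1:=\om_1$ and $A_i:=\om_i\setminus\bigcup_{j<i}\om_j$ for $2\le i\le p$, so that the $A_i$ are pairwise disjoint and $\bigcup_i A_i=\om$. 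Setting $\phi_i:=\nu|_{A_i}\in\mathcal{M}(\om_i)_+$ yields a feasible tuple for $\Q$ with $\sum_i\phi_i=\nu$ and the same objective value. Hence $\Q$ and $\P$ have equal value and their optimal sets are in bijection through $\nu\mapsto(\nu|_{A_i})$.

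Applying Theorem \ref{LPINF_THM} with $\K:=\om$ gives the unique optimal $\nu^*=\mu_\om$ for $\P$; this immediately shows that $\phi_i^*:=\mu|_{A_i}$ is an optimal solution of $\Q$ (existence), and that for \emph{any} optimal tuple $(\phi_1^*,\ldots,\phi_p^*)$ of $\Q$ the sum $\sum_i\phi_i^*$ is feasible and optimal for $\P$, hence equal to $\mu_\om$ by the uniqueness part of that theorem. The only subtle point, and where I expect a careful word is needed, is justifying that Theorem \ref{LPINF_THM} may be invoked with $\K=\om$ even though $\om$ is only a \emph{finite union} (not a single basic) semi-algebraic set: the statement and proof of Theorem \ref{LPINF_THM} are purely measure-theoretic, resting on a Radon--Nikodym decomposition $\phi=\mu_\K-\sigma$ with $\sigma\ge0$ and the positivity of $f$ a.e.\ on $\K$, and require only that $\K$ be Borel with $\K\subset\B$. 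Since $\om$ is a finite union of closed basic semi-algebraic sets it is closed, hence Borel, so the theorem applies verbatim; the semi-algebraic description of the individual $\om_i$ will only be needed later, to build the semidefinite relaxations.
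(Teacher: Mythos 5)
Your proof is correct, but it takes a different route from the paper's. You reduce $\Q$ wholesale to the single-measure problem $\P$ with $\K=\om$ via the summation map $(\phi_i)\mapsto\nu=\sum_i\phi_i$, using the disjointification $A_i=\om_i\setminus\bigcup_{j<i}\om_j$ to split any feasible $\nu$ back into a feasible tuple, and then you import existence, the optimal value and the identity $\sum_i\phi_i^*=\mu_\om$ entirely from Theorem \ref{LPINF_THM}. The paper instead proves attainment for $\Q$ directly, by taking a maximizing sequence and using weak sequential compactness of $\{\phi:\phi\leq\mu\}$ (Dunford--Schwartz), and it exhibits an explicit optimizer not by disjointification but via the partition of unity $\theta_i(\x)=1_{\om_i}(\x)/\vert\{j:\x\in\om_j\}\vert$, i.e.\ the measures $\phi_i^*$ of (\ref{phistar}); only the final uniqueness step (that the sum of any optimal tuple equals $\mu_\om$) is delegated to Theorem \ref{LPINF_THM}, exactly as you do. Your shortcut is legitimate and more economical, and you correctly flag the only delicate point, namely that Theorem \ref{LPINF_THM} is being invoked for the union $\om$ rather than a single basic semi-algebraic set --- the paper itself does this in its last step, since the argument is purely measure-theoretic and needs only $\om$ Borel with $f>0$ a.e.\ on $\om$. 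What your route gives up is of later use rather than logical necessity: the compactness argument is the template reused in the proof of Theorem \ref{thm:SDP}, and the specific optimizer (\ref{phistar}) with piecewise-constant densities is what the paper exploits for Slater's condition (Proposition \ref{slater}) and for deriving the Stokes constraints (\ref{stokes-alpha-3}), whereas your disjointified optimizer would not serve that purpose as directly. One small imprecision: the optimal sets of $\Q$ and $\P$ are not in bijection under $\nu\mapsto(\nu\vert_{A_i})$, since summation is not injective on tuples; what you actually use --- that summation maps optimal tuples to optimal solutions of $\P$ and that disjointification provides a section --- is all that is needed, so this does not affect the argument.
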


\begin{proof}
We first prove that $\Q$ has an optimal solution. The set $\Delta_\mu:=\{\phi\in\mathcal{M}(\R^n)_+:\phi\leq\,\mu\}$
is weakly sequentially compact; see e.g. Dunford \& Schwartz \cite[Theorem 1, p. 305]{dunford}. Therefore let
$(\phi^k_1,\ldots,\phi^k_p)_{k\in\N}$ be a maximizing sequence of feasible solutions of $\Q$. There exists
a subsequence $(k_\ell)_{\ell\in\N}$ such that for every $i=1,\ldots,p$, $\phi^{k_\ell}_i\stackrel{w}{\to}\phi^*_i$ for some
$\phi^*_i\in\mathcal{M}(\R^n)_+$. The above weak convergence and $\displaystyle\int_{\om_i^c}\,d\phi_i^{k_\ell}=0$ implies
$\displaystyle\int_{\om_i^c}\,d\phi_i^*=0$, that is, $\phi^*_i\in\mathcal{M}(\om_i)_+$ for all $i=1,\ldots,p$.
Weak convergence again implies $\sum_{i=1}\phi^*_i\leq\mu$ and so $(\phi^*_1,\ldots,\phi^*_p)$ is a feasible solution of $\Q$.
Finally weak convergence also implies 
\[f^*\,=\,\lim_{\ell\to\infty}\sum_{i=1}^p\int f\,d\phi^{k_\ell}_i\,=\,\sum_{i=1}^p\lim_{\ell\to\infty}\int f\,d\phi^{k_\ell}_i\,=\,
\sum_{i=1}^p\int f\,d\phi^*_i,\]
which proves that $(\phi^*_1,\ldots,\phi^*_p)$ is an optimal solution of $\Q$.

We next prove that $f^*=\int  fd\mu_\om$.
Indeed, firstly observe that for every feasible solution $(\phi_1,\ldots,\phi_p)$ of $\Q$,
$\sum_{i=1}^p\int f d\phi_i \le \int_\om fd\mu= \int fd\mu_\om$. On the other hand,
for every $i=1,\ldots,p$, denote by $\theta_i$ the measurable function defined on $\om$ by :
\begin{center}
\begin{align*}
\x\mapsto \theta_i(\x) = \frac{1}{\vert\{j \in \{1,\ldots,p\} : \x \in \om_j \}\vert} 1_{\om_i}(\x),\qquad\x\in\om.
\end{align*} 
\end{center}
The (discontinuous) functions $(\theta_i)_{i=1,\ldots,p}$ form a {\it partition of unity} subordinate to the open cover $\bigcup_i{\rm int}(\om_i)$.
For every $i=1,\ldots,p$, let $\phi^*_i\in\mathcal{M}(\om_i)_+$ be the finite Borel measure defined by:
\begin{equation}
\label{phistar}
\phi^*_i(C) \,:=\,\int_C \theta_i(\x)\,d\mu(\x),\qquad\forall C\in\mathcal{B}(\R^n).
\end{equation}
Hence, $\sum_{i=1}^p\phi^*_i(C)=\int_C\sum_{i=1}^p \theta_i(\x)d\mu(\x) = \int_C 1_{\om}(\x)(\x)d\mu(\x) = \mu_\om(C) \le \mu(C)$. Therefore
$(\phi^*_1,\ldots,\phi^*_p)$ is a feasible solution of $\Q$ such that $\sum_{i=1}^p\phi^*_i=\mu_\om$, and so
$\sum_{i=1}^p\int fd\phi^*_i=f^*$, i.e., $(\phi^*_1,\ldots,\phi^*_p)$ is an optimal solution of $\Q$.
In fact, {\it every} optimal solution $(\phi^*_1,\ldots,\phi^*_p)$ of $\Q$ satisfies 
$\sum_{i=1}^p \int fd\phi^*_i=f^*$, and therefore $\phi^*:=\sum_{i=1}^p\phi^*_i\in\mathcal{M}(\om)_+$
is an optimal solution of $\sup_\phi\{\int fd\phi:\phi\leq\mu;\:\phi\in\mathcal{M}(\om)_+\}$. By Theorem \ref{LPINF_THM}
this solution $\phi^*$ is unique, which yields the desired result.
\end{proof}

\subsection*{A hierarchy of semidefinite relaxations}

Let $\z = (z_\alpha)_{\alpha\in\N^n}$ be the sequence of all moments of $\mu$
, that is,
\begin{equation}
\label{moments-mu}
z_\alpha\,:=\,\int \x^\alpha\,d\mu(\x),\quad \alpha\in\N^n.
\end{equation} 
Let $\B\subset\R^n$ be a box and $\om\subset\B$ be a compact semi-algebraic as in (\ref{SEMIALGDEF}).
With no loss of generality and possibly after scaling, we may and will assume that $\B\subset [-1,1]^n$
and $\mu$ is a probability measure. Therefore $\vert z_\alpha\vert\leq 1$ for all $\alpha\in\N^n$.

Let $r_{ij}= \lceil {\rm deg}(g_{ij})/2 \rceil$ and let $f\in \mathbb{R}[\x]$ be a given polynomial 
positive almost everywhere on $\om$ (and define $r_{00} := \lceil {\rm deg}(f)/2 \rceil$). For $d \ge d_0:=\mbox{max}_{i,j} r_{ij}$, consider the following 
hierarchy of semidefinite programs $(\Q_d)$ indexed by $d\in\N$ :
\begin{equation}
\label{primal-newsdp}
\Q_d:\quad\begin{array}{rl}
\rho^f_d=\displaystyle\sup_{\y^1,\ldots,\y^d}&\Big \{\, \displaystyle\sum_{i=1}^p L_{\y^i}(f)\\
\mbox{s.t.}&\M_d(\z - \sum_{i=1}^p \y^i) \succeq 0;\:\M_d(\y^i)\succeq0,\quad i=1,\ldots,p\\
& \M_{d-r_{ij}}(g_{ij}\,\y^i) \succeq 0, \quad j=1,\ldots,m_i;\:i=1,\ldots,p\,\Big \}.
\end{array}
\end{equation}

Observe that $\rho^f_d\geq f^*$ for all $d\in\N$. Indeed, if $(\z^1,\ldots,\z^p)$
is the sequence of moments of an optimal solution $(\phi^*_1,\ldots,\phi^*_p)$ of  $\Q$ in (\ref{LPINF_Q})
then $(\z^1,\ldots,\z^p)$ is also a feasible solution of $\Q_d$.

\begin{theorem} \label{thm:SDP}
Consider the semidefinite programs $(\Q_d)$, $d\geq d_0$. Then :

\noindent
(i) $\Q_d$ has an optimal solution and the associated sequence of optimal values $(\rho^f_d)_{d\in\N}$ is monotone non increasing
and converges to $f^*$, that is:
\[\rho^f_d\,\downarrow\, f^*\,=\,\int_\om f\,d\mu,\quad \mbox{as $d\rightarrow \infty$.}\]
\noindent
(ii) Let $(\y^{1,d},\ldots,\y^{p,d})$ be an optimal solution of $\Q_d$. Then for each $\alpha\in\N^n$:
\begin{equation}
\label{convergence}
\lim_{d\to\infty}\displaystyle \sum_{i=1}^py^{i,d}_\alpha \,=\, z^*_\alpha\,=\,\int_\om \x^\alpha\,d\mu.\end{equation}
and in particular $\displaystyle\lim_{d\to\infty} \sum_{i=1}^py^{i,d}_0=\mu(\om)$.
\end{theorem}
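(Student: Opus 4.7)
For (i), I would first argue that the sequence $(\rho^f_d)$ is monotone non-increasing. If $(\y^1,\ldots,\y^p)$ is feasible for $\Q_{d+1}$, then truncating each $\y^i$ to moments of order at most $2d$ yields a feasible point for $\Q_d$ with the same objective (principal submatrices of PSD matrices remain PSD, and $\deg f\leq 2d_0\leq 2d$), so $\rho^f_{d+1}\leq\rho^f_d$. For attainment, observe that $\M_d(\z-\sum_i\y^i)\succeq 0$ with $\M_d(\y^i)\succeq 0$ forces $0\leq y^i_{2\alpha}\leq z_{2\alpha}\leq 1$ for $|\alpha|\leq d$, and then Cauchy--Schwarz in the bilinear form associated with $\M_d(\y^i)$ gives $|y^i_\alpha|^2\leq y^i_0\cdot y^i_{2\alpha}\leq 1$ for every $|\alpha|\leq 2d$. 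Hence the feasible set is a closed bounded (therefore compact) subset of $\R^{p\cdot s(2d)}$ on which the linear objective attains its supremum.

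\textbf{Lower bound on $\rho^f_d$.} For any optimal solution $(\phi^*_1,\ldots,\phi^*_p)$ of $\Q$ (which exists by Theorem \ref{OPTSOLQ}), the truncations of its moment sequences are feasible for every $\Q_d$ with objective value $f^*$, so $\rho^f_d\geq f^*$ for all $d$.

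\textbf{Convergence via a diagonal extraction.} Let $(\y^{1,d},\ldots,\y^{p,d})$ be an optimal solution of $\Q_d$; extend each $\y^{i,d}$ to an infinite sequence by padding with zeros. By the uniform bound $|y^{i,d}_\alpha|\leq 1$ just established, a diagonal (Tychonoff) extraction gives a subsequence $(d_k)$ along which $y^{i,d_k}_\alpha\to y^{i,*}_\alpha$ for every $i$ and every $\alpha\in\N^n$. Passing to the limit in the PSD constraints of $\Q_{d_k}$, which at fixed order involve only finitely many entries, we get $\M_d(\y^{i,*})\succeq 0$, $\M_{d-r_{ij}}(g_{ij}\,\y^{i,*})\succeq 0$, and $\M_d(\z-\sum_i\y^{i,*})\succeq 0$ for every $d$ and every $j$. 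Since $\om_i\subset\B\subset[-1,1]^n$, we may include the redundant ball constraint $n-\|\x\|^2\geq 0$ in the description of each $\om_i$, making each quadratic module $Q(g_{i1},\ldots,g_{i,m_i})$ Archimedean. Putinar's theorem then gives a finite Borel measure $\phi^*_i\in\mathcal{M}(\om_i)_+$ whose moment sequence is $\y^{i,*}$, and similarly $\z-\sum_i\y^{i,*}$ is represented by a nonnegative measure on the compact set $\B$; by determinacy of moments on $\B$, this forces $\sum_i\phi^*_i\leq\mu$. Thus $(\phi^*_1,\ldots,\phi^*_p)$ is feasible for $\Q$ and
\[
\lim_{k\to\infty}\rho^f_{d_k}\,=\,\sum_{i=1}^p L_{\y^{i,*}}(f)\,=\,\sum_{i=1}^p\int f\,d\phi^*_i\,\leq\,f^*.
\]
Combined with $\rho^f_d\geq f^*$ and monotonicity, this gives $\rho^f_d\downarrow f^*$, and moreover $(\phi^*_1,\ldots,\phi^*_p)$ is optimal for $\Q$.

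\textbf{Part (ii) and main obstacle.} The extraction above applies to any subsequence, so every accumulation point of $(\y^{1,d},\ldots,\y^{p,d})$ in the product topology corresponds to moments of some optimal solution of $\Q$. By Theorem \ref{OPTSOLQ}, \emph{every} optimal solution satisfies $\sum_i\phi^*_i=\mu_\om$, hence $\sum_i y^{i,*}_\alpha=\int\x^\alpha\,d\mu_\om=z^*_\alpha$ is the same for \emph{every} accumulation point. Since $(\sum_i y^{i,d}_\alpha)_d$ is bounded and has this unique accumulation point, it converges to $z^*_\alpha$; taking $\alpha=0$ gives the last claim. I expect the main technical obstacle to be justifying that each limiting sequence $\y^{i,*}$ really arises from a measure supported on $\om_i$: this is where the Archimedean condition must be carefully arranged (via the redundant ball constraint) and where an infinite family of PSD constraints must be passed to the limit along a single diagonal subsequence; once this representation step is secured, the use of the uniqueness part of Theorem \ref{OPTSOLQ} to upgrade subsequence convergence to full convergence is essentially formal.
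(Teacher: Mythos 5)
Your proof is correct and follows essentially the same route as the paper: uniform bounds on the truncated moments give compactness of the feasible set (where the paper cites \cite[Proposition 3.6]{lass-book-icp} you use Cauchy--Schwarz; note only that for $d<\vert\alpha\vert\le 2d$ the quantity $y^i_{2\alpha}$ is not a variable of $\Q_d$, so one should write $\alpha=\beta+\gamma$ with $\vert\beta\vert,\vert\gamma\vert\le d$ and bound $\vert y^i_\alpha\vert\le\sqrt{y^i_{2\beta}\,y^i_{2\gamma}}$), then a pointwise-convergent subsequence (your diagonal extraction is the paper's Banach--Alaoglu step), Putinar representation of the limit sequences as measures on the $\om_i$ with $\sum_i\phi_i\le\mu$, optimality in $\Q$, and the uniqueness part of Theorem \ref{OPTSOLQ} to upgrade subsequential to full convergence. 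Your explicit treatment of the Archimedean requirement via the redundant ball constraint and of the domination $\sum_i\phi_i^*\le\mu$ via moment determinacy of $\mu$ only makes explicit what the paper leaves implicit when invoking Putinar's theorem.
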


\begin{proof}
For a sequence $\y= (y_\alpha)$, let $\tau_d(\y) = \max_{i=1,\ldots,n}L_{\y}(x_i^{2d})$ and recall
that if $\M_d(\y)\succeq0$ then $\vert y_\alpha\vert\leq \max[y_0,\max_iL_\y(x_i^{2d})]$ for every $\alpha\in\N^n_{2d}$; see \cite[Proposition 3.6]{lass-book-icp}.
Next, observe that from $\M_d(\z - 
\sum_{i=1}^p\y^{i,d}) \succeq 0$ and $\M_d(\y^i) \succeq 0$, 
\[\M_d(\z - \y^{i,d}) \,\succeq\, \M_d(\sum_{j \neq i} \y^j) \succeq 0,\quad i=1,\ldots,n.\]
Hence the diagonal elements $z_{2\alpha} - y^{i,d}_{2\alpha}$ are all nonnegative which in turn implies
$\tau_d(\y^{i,d}) \le \tau_d(\z)\leq 1$ for all $i=1,\ldots,n$. 
As $z_0=1$ then  by \cite[Proposition 3.6 ]{lass-book-icp} $\vert y^{i,d}_{\alpha}\vert \le 1$ 
for every $\alpha \in \mathbb{N}^n_{2d}$, and so the feasible set of semidefinite program
$\Q_d$ is closed, bounded, hence compact, and therefore $\Q_d$ has an optimal solution.

Next, let $(\y^{1,d},\ldots,\y^{p,d})$ be an optimal solution of $\Q_d$ and by completing with zeros, make $(\y^{1,d},\ldots,\y^{p,d})$ 
an element of the unit ball of  $(\ell_\infty)^p$ (where
$\ell_\infty$ is the Banach space of bounded sequences, equipped with the sup-norm). 
As $(\ell_\infty)^p$ is the topological dual of $(\ell_1)^p$, by the Banach-Alaoglu Theorem, there exists $(\y^{1,*},..,\y^{p,*}) \in (\ell_\infty)^p$ and a subsequence $\{d_k\}$ such that $(\y^{1,d_k},\ldots,\y^{p,d_k}) \rightarrow (\y^{1,*},\ldots,\y^{p,*})$ as 
$k\rightarrow \infty$, for the weak $\star$ topology $\sigma((\ell_\infty)^p, (\ell_1)^p)$. In particular,
\begin{equation}
\label{CVS}
\lim_{k \to\infty}\, y_\alpha^{i,d_k}\, =\, y_\alpha^{i,*},\quad \forall \alpha \in \mathbb{N}^n, \forall i \in \{1,..,p\}.
\end{equation}
Next let $d\in \mathbb{N}$ be fixed arbitrary. From the pointwise convergence (\ref{CVS}) we also obtain $\M_d(\y^{i,*}) \succeq 0$ and $\M_d(\z-
\sum_{i=1}^p\y^{i,*}) \succeq 0$ for every $i=1,\ldots,p$. Similary, $\M_{d-r_{ij}}(g_{ij}\y^{i,*}) \succeq 0$ for every $i$ and $j$. 
As $d$ was arbitrary, by Putinar's Positivistellensatz \cite{putinar}, $\y^{i,*}$ has a representing measure $\phi_i$ supported on 
$\om_i$ for all $i=1,\ldots,p$, and $\sum_{i=1}^p \phi_i \le \mu$. 
In particular from (\ref{CVS}), as $k \to \infty$,
\[f^* \le\rho^f_{d_k} \,=\, \sum_{i=1}^p L_{\y^{i,d_k}}(f) \downarrow \sum_{i=1}^p L_{\y^{i,*}}(f) = \sum_{i=1}^p \int f\,d\phi_i.\]
Therefore $(\phi_1,\ldots,\phi_p)$ is admissible for problem $\Q$ with value $\sum_{i=1}^p \int fd\phi_i \ge f^*$, and so
$(\phi_1,\ldots,\phi_p)$ is an optimal solution of $\Q$. 
Finally, by Theorem \ref{OPTSOLQ}, $\sum_{i=1}^p \phi_i = \mu_\om$. And so for each $\alpha\in\N^n$: 
\[\lim_{k\rightarrow \infty}\,\sum_{i=1}^py^{i,d_k}_\alpha\, = \,\sum_{i=1}^py^{i,*}_\alpha\, =\,  z^*_\alpha\,=\,\int_\om\x^\alpha\,d\mu(\x).\]
As the converging subsequence $(d_k)_{k\in\N}$ was arbitrary, it follows that in fact the whole sequence $(\sum_{i=1}^py^{i,d}_\alpha)_d$ 
converges to $z_\alpha$, for all $\alpha\in\N^n$, that is, (\ref{convergence}) holds.
\end{proof}

\subsection*{The dual of $\Q_d$}
Let $g_{i0}(\x)=1$ for all $\x\in\R^n$, $i=1,\ldots,p$. The dual of the semidefinite program $\Q_d$ is the semidefinite program:
\begin{equation}
\label{dual-newsdp}
\Q^*_d:\quad\begin{array}{rl}
(\rho^f _d)^*=\displaystyle\inf_{q,\sigma_{ij}}&\Big\{\displaystyle\int_\B q\,d\mu:\\
\mbox{s.t.}&q-f\,=\,\displaystyle\sum_{j=0}^{m_i}\sigma_{ij}\,g_{ij},\quad i=1,\ldots,p\\
&\sigma_{ij}\in\Sigma[\x]_{d-r_{ij}},\quad j=0,\ldots,m_i;\:i=1,\ldots,p\\
&q\in\Sigma[\x]_d\Big\}.
\end{array}
\end{equation}

\begin{prop}
\label{slater}
Assume that for every $i=1,\ldots,p$, both $\om_i$ and $\B\setminus\om_i$ have nonempty interior.
Then there is no duality gap between (\ref{primal-newsdp}) and its dual (\ref{dual-newsdp}), that is,
$\rho^f _d=(\rho^f _d)^*$ 
for all $d\geq d_0$. Moreover (\ref{dual-newsdp}) has an optimal solution $(q^*,(\sigma_{ij}^*)$.
\end{prop}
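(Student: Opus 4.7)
The plan is to verify Slater's condition (strict feasibility) for the primal SDP $\Q_d$ in (\ref{primal-newsdp}); combined with finiteness of the primal optimum (established in the proof of Theorem~\ref{thm:SDP}, where the feasible set was shown to be compact), this immediately yields $\rho^f_d=(\rho^f_d)^*$ and attainment of the dual (\ref{dual-newsdp}) by the standard strong-duality theorem for conic programs.

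To construct a Slater point, for each $i=1,\ldots,p$ I use the hypothesis $\mathrm{int}(\om_i)\neq\emptyset$ to select an open Euclidean ball $U_i\subset\mathrm{int}(\om_i)$ on which every $g_{ij}$, $j=1,\ldots,m_i$, is strictly positive; such a ball exists because the union of zero sets of the $g_{ij}$ is a finite union of polynomial hypersurfaces, hence has empty interior in $\mathrm{int}(\om_i)$. Using $\mathrm{int}(\B\setminus\om_i)\neq\emptyset$, I shrink the $U_i$ so that $V:=\mathrm{int}(\B)\setminus\bigcup_i\overline{U_i}$ is nonempty and open. Pick nonnegative $C^\infty$ bump functions $h_i$ supported in $U_i$ with $h_i>0$ on a smaller open ball, and set $d\phi_i:=\varepsilon\,h_i\,d\mu$ with $\varepsilon>0$ small enough that $\varepsilon\sum_{k=1}^p h_k\leq 1/2$ pointwise; then $\mu-\sum_{i=1}^p\phi_i$ is a positive Borel measure whose density with respect to $\mu$ is at least $1/2$ on $V$. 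Let $\y^{i,\star}:=(\int\x^\alpha\,d\phi_i)_{\alpha\in\N^n_{2d}}$ be the truncated moment sequences. For any nonzero $p\in\R[\x]_d$ the quadratic forms
\[p^\top\M_d(\y^{i,\star})\,p=\int p^2\,d\phi_i \quad\text{and}\quad p^\top\M_d\Bigl(\z-\sum_{k=1}^p\y^{k,\star}\Bigr)\,p=\int p^2\,d\Bigl(\mu-\sum_{k=1}^p\phi_k\Bigr)\]
are strictly positive, because in each case the integrand is nonnegative and strictly positive on a set of positive measure (the open set $\{p\neq 0\}$ meets $\{h_i>0\}$, respectively meets $V$). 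Similarly, for any nonzero $p\in\R[\x]_{d-r_{ij}}$, $p^\top\M_{d-r_{ij}}(g_{ij}\,\y^{i,\star})\,p=\int p^2\,g_{ij}\,d\phi_i>0$, since $g_{ij}>0$ on $U_i\supset\mathrm{supp}(\phi_i)$. Hence $(\y^{1,\star},\ldots,\y^{p,\star})$ makes every semidefinite constraint of $\Q_d$ strictly feasible.

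The main subtlety I anticipate is the simultaneous calibration of the $U_i$ and $\varepsilon$: each $\phi_i$ must be supported in a region where \emph{all} $g_{ij}$ are bounded away from $0$, \emph{and} the slack measure $\mu-\sum_i\phi_i$ must retain a positive density on an open subset of $\B$. The two interior hypotheses, on $\om_i$ and on $\B\setminus\om_i$ respectively, are used precisely to make these two requirements simultaneously achievable; once Slater holds, strong duality and dual attainment follow by standard conic duality theory.
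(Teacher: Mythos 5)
Your proposal is correct and follows essentially the same route as the paper: exhibit a strictly feasible point for the moment relaxation (\ref{primal-newsdp}) using the nonempty-interior hypotheses, note the primal value is finite, and invoke the standard conic strong-duality theorem to get $\rho^f_d=(\rho^f_d)^*$ with dual attainment. The only difference is cosmetic: the paper takes as Slater point the truncated moments of the measures $\phi^*_i$ from (\ref{phistar}), whereas you build small bump-function measures supported where all $g_{ij}>0$, which if anything makes the positive-definiteness of the slack matrix $\M_d(\z-\sum_i\y^i)$ slightly more transparent.
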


\begin{proof}
Let $(\phi^*_1,\ldots,\phi^*_p)$ be the measures defined in (\ref{phistar}) the proof of the Theorem \ref{OPTSOLQ} and let 
$\y^i_d$ be the sequence of moments up to degree $d$ of $\phi^*_i$, $i=1,\ldots,p$.
As every  $\om_i$ has nonempty interior, then clearly $\M_d(\y^i) \succ 0$ and $\M_{d-r_{ij}}(g_{ij}\y^i) \succ 0$ for every 
$j= 1,\ldots,m_i$ and $i = 1,\ldots,p$. As $\B\setminus\om$ also has nonempty interior then $\M_d(\z - \sum_{i=1}^p\y^i) \succ 0$. 
Therefore Slater's condition holds for $\Q_d$. In addition, the set of admissible solution of $\Q_d^*$ is nonempty (set $q=f$ and $\sigma_{ij}=0$ for all $i,j$), and therefore  
a standard result in conic convex optimization yields the desired result\footnote{In fact as the set of optimal solutions of (\ref{primal-newsdp})
is compact, the absence of a duality gap between (\ref{primal-newsdp}) and
(\ref{dual-newsdp}) also follows from \cite{strong} without the conditions ${\rm int}(\om_i)\neq\emptyset$ and
${\rm int}(\B\setminus \om_i)\neq\emptyset$.}.

\end{proof}

As in the case of a basic closed semi-algebraic set, when $f$ is the constant function $1$ the convergence
$\rho^f_d\to f^* =\mu(\om)$ is monotone non increasing, a highly desirable feature. However 
in typical examples this convergence is rather slow. Again one may take for $f$
a function that is nonnegative on $\om$ and which vanishes on $\partial\om$. This accelerates the convergence 
both $\rho^f_d\to f^*$ and $\sum_i y^{id}_0\to \mu(\om)$ as $d\to\infty$, but if by construction the former is monotone
non increasing,  the latter is not monotone anymore, a rather annoying feature if the goal is to obtain a 
converging sequence of {\it upper bounds}. In the next section we describe a technique
that allows to accelerate significantly the convergence $\sum_i y^{id}_0\to \mu(\om)$ as $d\to\infty$, while maintaining
its monotone non increasing character.

\subsection{Convergence improvement using Stokes' formula}

In this section we show how to improve significantly the monotone non increasing
convergence of $\rho_d^1$ (i.e. $\rho^f_d$ with $f=1$) to $\mu(\om)$. To do this we will use Stokes' theorem for integration and in the sequel, to avoid technicalities we assume that $\om \subset \mathbb{R}^n$ is the closure of its interior, i.e., $\om=\overline{{\rm int}(\om)}$. 
The basic idea is simple to express in informal terms.

\begin{center}
{\it Since we know in advance that  $(\phi^*_1,\ldots,\phi^*_p)$ in  (\ref{phistar}) is an optimal solution of problem $\Q$,
every additional information in terms of linear constraints on the moments of $\phi^*_i$
can be included in $\Q$ without changing its optimal value. BUT when included
in the relaxation $\Q_d$ it will provide useful additional constraints that restrict the feasible set of $\Q_d$ and so
 make its optimal value necessarily smaller.}
\end{center}

Suppose for the moment that $\om$ is compact
with smooth boundary, and assume that the measure $\mu$ has a density $h$ with respect to Lebesgue measure $d\x$ of the form
$q(\x)\exp(r(\x))1_{\B}(\x)$ for some polynomial $r,q\in \R[\x]$. 
Let $X$ be a given vector field and $f\in \R[\x]$. Then Stokes' theorem states:
\[\int_{\om} {\rm Div}(X)\,f(\x)\, h(\x) d\x + \int_{\om} \langle X, \nabla (f(\x)\,h(\x))\rangle d\x = \int_{\partial K} 
\langle X,\vec{n}_\x \rangle f(\x)h(\x) d\sigma(\x),\]
where $\vec{n}_\x$ is outward pointing normal at $\x\in\partial\om$, and $\sigma$ is the 
$(n-1)$-dimensional Hausdorff measure on $\partial\om$. In particular if $f$ vanishes on $\partial\om$ 
and $\X=e_k\in\R^n$ (where $e_k(j)=\delta_{k=j}$) Stokes' formula becomes
\begin{equation}
\label{stokes}
\int_{\om} \frac{\partial}{\partial x_k}\,(f(\x)\,h(\x))\, d\x = 0.
\end{equation}
To exploit (\ref{stokes}) in our particular context where $\om$ is defined in (\ref{setk}), 
let $g = \prod_{i=1}^p \prod_{j=1}^{m_i} g_{ij}$ and let $\x\mapsto f(\x) = \x^\alpha\,g(\x)q(\x)$
with $\alpha\in\N^n$ arbitrary. Then $f$ vanishes on $\partial\om$ and 
on $\partial \om_{i_1,\ldots,i_s}:=\om_{i_1}\cap\cdots\cap\om_{i_s}$ for all $1\leq i_1<\ldots<i_s\leq p$, $s=1,\ldots,p$.
Hence by (\ref{stokes}):
\begin{eqnarray}
\label{stokes-alpha-1}
\int_{\om} p_{\alpha,k} (\x)\,\underbrace{q(\x)\,\exp(r(\x))\,d\x}_{d\mu(\x)}\,=\,0\\
\label{stokes-alpha-2}
\int_{\om_{i_1,\ldots,i_s}} p_{\alpha,k} (\x)\,\underbrace{q(\x)\,\exp(r(\x))\,d\x}_{d\mu(\x)}&=&0,
\end{eqnarray}
for all $1\leq i_1<\ldots<i_s\leq p$, $s=1,\ldots,p$, where 
\[p_{\alpha,k} (\x) = q(\x)\frac{\partial}{\partial_{x_k}} (\x^\alpha g(\x)) + 2\x^\alpha g(\x)\,\frac{\partial}{\partial_{x_k}}q(\x) + \x^\alpha g(\x)q(\x)\frac{\partial}{\partial_{x_k}} r(\x).\]
Recalling how $\phi^*_i$ is defined in (\ref{phistar}), it can be written as 
\[\phi^*_i\,=\,\sum_{s=1}^p\sum_{1\leq i_1<\cdots<i_s\leq p}\phi^*_{i, i_1,\ldots,i_s},\]
where each $\phi^*_{i, i_1,\ldots,i_s}$ is supported on $\om_i\cap\om_{i_1,\ldots,i_s}$ and has a constant density w.r.t. $\mu$.
Therefore, for every $i=1,\ldots,p$:
\begin{equation}
\label{stokes-alpha-3}
\int_{\om_i}p_{\alpha,k} (\x)\,d\phi^*_i\,=\,0,\quad\forall \alpha\in\N^n;\:k=1,\ldots,n.
\end{equation} 
Hence (\ref{stokes-alpha-3}) provides additional useful information
on the optimal solution $(\phi^*_1,\ldots,\phi^*_p)$ of $\Q$
defined in (\ref{phistar}). Namely it translates into 
\[L_{\y^i}(p_{\alpha,k})\,=\,0,\quad\forall \alpha\in\N^n;\:k=1,\ldots,n;\: i=1,\ldots,p,\]
i.e., linear constraints on the moments of $\phi^*_i$, for every $i=1,\ldots,p$.

 Plugging this additional linear constraints
on the moments of $\phi^*_i$ into the relaxation $\Q_d$, yields the following new hierarchy 
of SDP-relaxation $(\Q_d^{{\rm stokes}})_{d\geq d_0}$:
\begin{equation}
\label{primal-newsdp-stokes}
\begin{array}{rl}
\rho^{{\rm Stokes}}_d=\displaystyle\sup_{\y^1,\ldots,\y^d}&\Big \{\, \displaystyle\sum_{i=1}^p \y^i_0\\
\mbox{s.t.}&\M_d(\z - \sum_{i=1}^p \y^i) \succeq 0;\:\M_d(\y^i)\succeq0,\quad i=1,\ldots,p\\
& \M_{d-r_{ij}}(g_{ij}\,\y^i) \succeq 0, \quad j=1,\ldots,m_i;\:i=1,\ldots,p\\
&L_{\y^i}(p_{\alpha,k})\,=\,0,\quad k=1,\ldots,n;\:\vert\alpha\vert\leq 2d-{\rm deg}(p_{\alpha,k})\\
&i=1,\ldots,p\Big \}.
\end{array}
\end{equation}
By construction $\rho^1_d\geq\rho^{{\rm Stokes}}_d\geq\mu(\om)$ holds for every $d\geq d_0$,
and the analogue of Theorem \ref{thm:SDP} (with $f=1$) reads:

\begin{theorem} \label{thm:SDP-Stokes}
Consider the semidefinite programs $(\Q_d^{{\rm Stokes}})$, $d\geq d_0$, defined in (\ref{primal-newsdp-stokes}). Then :

\noindent
(i) $\Q_d^{{\rm Stokes}}$ has an optimal solution and the associated sequence of optimal values $(\rho^{{\rm Stokes}}_d)_{d\in\N}$ is monotone non increasing
and converges to $\mu(\om)$, that is:
\[\rho^{{\rm Stokes}}_d\,\downarrow\, \mu(\om),\quad \mbox{as $d\rightarrow \infty$.}\]
\noindent
(ii) Let $(\y^{1,d},\ldots,\y^{p,d})$ be an optimal solution of $\Q^{{\rm Stokes}}_d$. Then for each $\alpha\in\N^n$:
\begin{equation}
\label{convergence-stokes}
\lim_{d\to\infty}\displaystyle \sum_{i=1}^py^{i,d}_\alpha \,=\, z^*_\alpha\,=\,\int_\om \x^\alpha\,d\mu.\end{equation}
\end{theorem}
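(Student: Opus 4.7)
The plan is to mirror the proof of Theorem~\ref{thm:SDP} almost verbatim, adjusting for two things: (a) feasibility of the natural candidate solution for the Stokes-augmented program, which delivers both an optimum and the lower bound $\rho^{{\rm Stokes}}_d\geq\mu(\om)$; and (b) preservation of the additional linear Stokes equalities when we pass to a weak-$*$ limit of optimal moment sequences. Part (ii) will then drop out of the identification step via Theorem~\ref{OPTSOLQ}, as in Theorem~\ref{thm:SDP}.

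First I would verify that the truncated moment sequences of the measures $\phi_i^*$ from (\ref{phistar}) are feasible for $\Q^{{\rm Stokes}}_d$. The PSD and localizing conditions hold exactly as in Theorem~\ref{OPTSOLQ}, and identity (\ref{stokes-alpha-3}) is precisely the content of the discussion preceding (\ref{primal-newsdp-stokes}): each $\phi_i^*$ splits as $\sum_s \sum_{i_1<\cdots<i_s}\phi^*_{i,i_1,\ldots,i_s}$ with constant density w.r.t.\ $\mu$ on every piece, and $\x\mapsto \x^\alpha g(\x) q(\x)$ vanishes on all relevant boundaries, so classical Stokes applies on each piece and summing gives $L_{\y^i}(p_{\alpha,k})=0$. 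This candidate attains objective value $\sum_i \phi_i^*(\R^n)=\mu(\om)$, proving $\rho^{{\rm Stokes}}_d\geq\mu(\om)$. Existence of an optimum and monotonicity $\rho^{{\rm Stokes}}_{d+1}\leq\rho^{{\rm Stokes}}_d$ carry over from Theorem~\ref{thm:SDP}: the PSD constraints still force $|y^{i,d}_\alpha|\leq 1$, the additional Stokes constraints are linear equalities (so the feasible set stays closed and bounded, hence compact), and truncating an optimizer of $\Q^{{\rm Stokes}}_{d+1}$ gives a feasible point of $\Q^{{\rm Stokes}}_d$, since enlarging $d$ only adds, never drops, Stokes equations and PSD conditions.

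For the convergence, I would take an optimal $(\y^{1,d},\ldots,\y^{p,d})$, extract by Banach--Alaoglu a subsequence $(d_k)$ with pointwise limit $(\y^{1,*},\ldots,\y^{p,*})$, and apply Putinar's Positivstellensatz exactly as in Theorem~\ref{thm:SDP} to obtain measures $\phi_i$ supported on $\om_i$ with $\sum_i\phi_i\leq\mu$. The key new point is trivial: each constraint $L_{\y^{i,d_k}}(p_{\alpha,k})=0$ involves only finitely many entries of $\y^{i,d_k}$, each of which converges pointwise, so $L_{\y^{i,*}}(p_{\alpha,k})=0$ automatically. Hence $(\phi_1,\ldots,\phi_p)$ is feasible for $\Q$ with $f=1$, which gives $\sum_i\phi_i(\R^n)\leq f^*=\mu(\om)$. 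Combining this with
\[\mu(\om)\,\leq\,\rho^{{\rm Stokes}}_{d_k}\,=\,\sum_{i=1}^p y^{i,d_k}_0\,\longrightarrow\,\sum_{i=1}^p\phi_i(\R^n),\]
forces equality, so $\rho^{{\rm Stokes}}_{d_k}\to\mu(\om)$; monotonicity then upgrades this to convergence of the full sequence, proving (i). Moreover $(\phi_1,\ldots,\phi_p)$ is optimal for $\Q$, so Theorem~\ref{OPTSOLQ} gives $\sum_i\phi_i=\mu_\om$, whence for every $\alpha\in\N^n$,
\[\sum_{i=1}^p y^{i,d_k}_\alpha\,\longrightarrow\,\sum_{i=1}^p\int\x^\alpha\,d\phi_i\,=\,\int_\om\x^\alpha\,d\mu,\]
and since the limit is independent of the extracted subsequence, the whole sequence converges, yielding (ii).

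The only real obstacle is the bookkeeping underlying feasibility of the canonical candidate $(\phi_1^*,\ldots,\phi_p^*)$: one must be careful that $g=\prod_{i,j} g_{ij}$ forces $\x^\alpha g q$ to vanish on every boundary piece $\partial(\om_i\cap\om_{i_1,\ldots,i_s})$, so that Stokes applies to each constant-density summand in the decomposition of $\phi_i^*$. Everything else is a transcription of the Theorem~\ref{thm:SDP} argument, since pointwise limits of moment sequences automatically preserve linear equality constraints involving only finitely many moments.
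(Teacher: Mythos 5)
Your proposal is correct and follows essentially the same route as the paper, which omits the proof precisely because it is an almost verbatim copy of that of Theorem~\ref{thm:SDP}. The two additional checks you supply---that the canonical candidate $(\phi^*_1,\ldots,\phi^*_p)$ from (\ref{phistar}) satisfies the Stokes equalities (\ref{stokes-alpha-3}), hence $\rho^{{\rm Stokes}}_d\geq\mu(\om)$, and that these finitely supported linear equalities are preserved under the pointwise (weak-$*$) limit before invoking Putinar's Positivstellensatz and Theorem~\ref{OPTSOLQ}---are exactly the points where the transcription needs care, and you handle them correctly.
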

The proof being almost a verbatim copy of that of Theorem \ref{thm:SDP}, is omitted.

The important feature of Theorem \ref{thm:SDP-Stokes} is that we now have 
the monotone non increasing convergence $\rho^{{\rm Stokes}}_d\downarrow\mu(\om)$
(compare with (\ref{convergence}) (with $\alpha=0$) in Theorem \ref{thm:SDP}).

\subsection{Gaussian measure of non compact sets $\om$}

So far {\rm Theorem \ref{thm:SDP} and Theorem \ref{thm:SDP-Stokes} have been given for
$\mu$ supported on a box $\B$, and so only for sets $\om$ in (\ref{SEMIALGDEF}) that are compact.

It turns out that for a Gaussian measure $\mu$ of (possibly non-compact) sets $\om=\bigcup_i\om_i$,
Theorem \ref{thm:SDP} (resp. Theorem \ref{thm:SDP-Stokes}) is still valid with exactly the same statement
and exactly the same semidefinite 
relaxations (\ref{primal-newsdp}) (resp. (\ref{primal-newsdp-stokes})), except that now
 $\z=(z_\alpha)$ is the vector of moments of the Gaussian measure $\mu$ (instead of
 the moments of the Lebesgue measure on $\B$ previously).

However in the gaussian case the proof of Theorem \ref{thm:SDP}(i)-(ii) and Theorem \ref{thm:SDP-Stokes}(i)-(ii)
uses quite different arguments (some already used in \cite{lass-gaussian} for a basic semi-algebraic set). Indeed
as $\om$ is not necessarily compact :

- The uniform bound $\sup_\alpha\vert\z_\alpha\vert\leq1$ is not valid any more for the relaxations $\Q_d$ and $\Q^{Stokes}_d$.

- One cannot invoke Putinar's Positivstellensatz \cite{putinar} any more.

- The standard version of Stokes' theorem where $\om$ is compact cannot be invoked anymore either.

The new arguments that we need are the following:

$\bullet$ A crucial fact is that $\mu$ satisfies Carleman's condition
\begin{equation}
\label{carleman}
\sum_{k=1}^\infty \left(\int_{\R^n} x_i^{2k}\,d\mu(\x)\right)^{-1/2k}\,=\,+\infty,\quad i=1,\ldots,n.\end{equation}
Then a sequence $\y=(y_\alpha)_{\alpha\in\N^n}$ such that $\M_d(\y)\succeq0$ for all $d\in\N$, and 
\[\sum_{k=1}^\infty L_\y(x_i^{2k})^{-1/2k}\,=\,+\infty,\quad i=1,\ldots,n,\]
has a unique representing measure $\phi$ on $\R^n$ which is moment determinate; see for instance \cite[Proposition 3.5, p. 60]{lass-book-icp}.

$\bullet$ If in addition $\M_d(h\,\y)\succeq0$ for all $d\in\N$ (where $h\in\R[\x]$), and as $\phi$
satisfies (\ref{carleman}), then $h(\x)\geq0$ for all $\x$ in the support of $\phi$; see Lasserre \cite{newlook}. This argument is used to show that $\phi$
is supported on $\om$.

$\bullet$ To obtain a version of Stokes for  non-compact set $\om$ with boundary $\partial\om$, we invoke  a limiting argument
that uses (the standard) Stokes's theorem on the compact $\om\cap \B(0,M)$ (where $\B(0,M)=\{\x:\Vert\x\Vert \leq M\}$).
Letting  $M\to\infty$ and using the Monotone and Bounded Convergence theorems yields the desired result.
For more details the reader is referred to \cite{lass-gaussian} where such arguments have been used in the case of a {\it basic} semi-algebraic set.

Finally it is worth emphasizing that this methodology also works for {\it any} measure $\mu$ that satisfies (\ref{carleman})
(and  whose moments are known or can be computed); an important spacial case is the exponential measure on the positive orthant $\R^n_+$.

\begin{rem}
{\rm As mentioned above, in \cite{lass-gaussian} the first author has already used Stokes' formula to accelerate the convergence
of a hierarchy of semidefinite relaxations to approximate the Gaussian measure $\mu(\om)$ of a {\it basic} semi-algebraic set
$\om$, not necessarily compact. 
The important and non trivial novelty here is that 
(i) $\om=\bigcup_{i=1}^p\om_i$ is now a {\it union} of basic semi-algebraic sets,
and (ii) even if this complicates matters significantly,
we are still able to work with measures $\phi_i$, each supported on $\om_i$ (a basic semi-algebraic set). 
It turns out that $\mu(\om)=\sum_{i=1}^p\phi_i^*$ where is 
each $\phi^*_i$ has a piecewise constant density w.r.t. $\mu$ (constant on each of the possible intersections
$\om_i\cap \om_{i_1,\ldots,i_p}$). 
By using a family of polynomials that all vanish on the boundary of {\it each} $\om_i\cap\om_{i_1,\ldots,i_p}$, we
can exploit Stokes' Theorem on each piece and sum up to obtain a family of linear constraints on the moments of $\phi^*_i$.
}\end{rem}

\section{Numerical experiments and discussion}

For illustration purposes we have applied the methodology on a few (simple) examples.
We report some numerical experiments carried out in Matlab and GloptiPoly3 \cite{gloptipoly}, a software package for manipulating and solving generalized problems of moments. The SDP problems were solved with SeDuMi 1.1R3. 

\subsection{Lebesgue volume of a union of two ellipsoids}

We first consider a simple example of two ellipsoids in $\R^2$ where the exact value
$\mu(\om)$ can be computed exactly so that we can compare with our upper bounds. So we want to compute the Lebesgue measure of 
$\om = \om_1 \cup \om_2$ with $\om_1 = \{(x_1,x_2) \in \mathbb{R}^2 : \frac{x_1^2}{4} + x_2^2 \le 1\}$ and $\om_2 = \{(x_1,x_2) \in \mathbb{R}^2 : 
\frac{x_2^2}{4} + x_1^2 \le 1\}$. In this example  we take $\B:=[-2,2]^2$.

\begin{figure}[htb]
\centering 
\includegraphics[width=0.65\columnwidth]{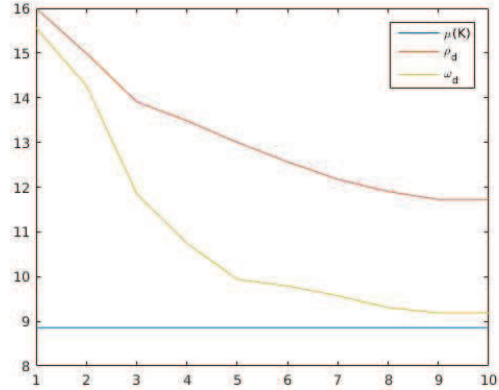} 
\caption{$n=2$: Lebesgue measure of a union of 2 ellipsoids
\label{fig:vol1} }
\end{figure}

The results are displayed in  the Figure~\ref{fig:vol1} with:  in \textbf{orange} the approximation of the Lebesgue volume
$\mu(\om)$ without using Stokes' formulas, in \textbf{red} the approximation when using Stokes' formulas and in \textbf{blue} the 
exact value of $\mu(\om)$.\\

We next consider a union of two ellipsoids in dimension $n=3$. Let $\om_1 = \{\x\in \mathbb{R}^3 : x_1^2 + 4x_2^2 + 4x_3^2\le 1\}$, $\om_2 = \{\x\in \mathbb{R}^3 : x_2^2 + 4x_1^2 + 4x_3^2\le 1\}$, $\om=\om_1\cup\om_2$ and $\mathbf{B} = [-1,1]^3$. 
Results are displayed in Figure~\ref{fig:vol2}.
In both examples one can check that the convergence is much faster when using  Stokes' formula.

\begin{figure}[htb]
\centering 
\includegraphics[width=0.65\columnwidth]{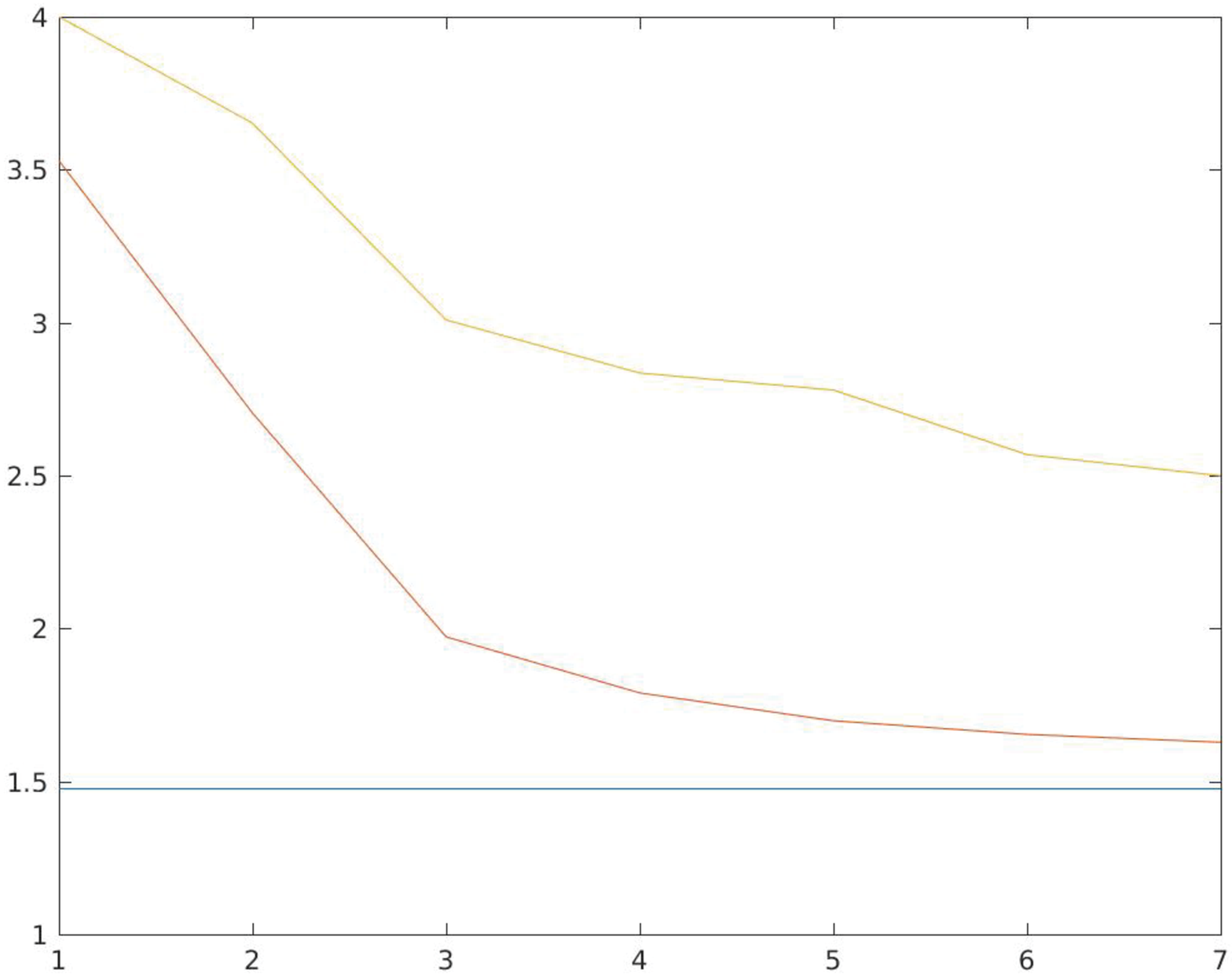} 
\caption{$n=3$: Lebesgue measure of a union of 2 ellipsoids \label{fig:vol2} }
\end{figure}

\subsection{Lebesgue measure a union of three ellipsoids}

We next  consider a union of three ellipsoid in dimension $n=2$, with: 
\[\om_1 = \{\x \in \mathbb{R}^2 : (x_1,x_2).\left[\begin{array}{cc}
\frac{16}{9} & 0\\
0 & 4
\end{array}\right]\left(\begin{array}{c}
x_1\\
x_2
\end{array}\right) \le 1\},\]
\[\om_2 = \{(x_1,x_2) \in \mathbb{R}^2 : \frac{1}{9}(x_1-0.1,x_2-0.1).\left[\begin{array}{cc}
31 & 5\sqrt{3}\\
5\sqrt{3} & 21
\end{array}\right]\left(\begin{array}{c}
x_1-0.1\\
x_2-0.1
\end{array}\right) \le 1\},\]
\[\om_3 = \{(x_1,x_2) \in \mathbb{R}^2 : \frac{1}{9}(x_1+0.1,x_2-0.1).\left[\begin{array}{cc}
31 & -5\sqrt{3}\\
-5\sqrt{3} & 21
\end{array}\right]\left(\begin{array}{c}
x_1+0.1\\
x_2-0.1
\end{array}\right) \le 1\},\]
and $\B = [-1,1]^2$. In Figure \ref{fig:vol3} we also compare our results with those obtained 
when using \textit{Bonferroni} inequalities. 
In \textbf{red} the upper bounds obtained by solving $\textbf{Q}^{Stokes}$, in \textbf{orange} the lower bounds
obtained by solving $\textbf{Q}^{Stokes}$ for the complement, and in \textbf{blue} the upper bounds obtained by using Bonferroni inequalities. (For a fair comparison, for each relaxation in Bonferroni case we also use appropriate Stokes' constraints.)
\begin{figure}[htb]
\centering 
\includegraphics[width=0.65\columnwidth]{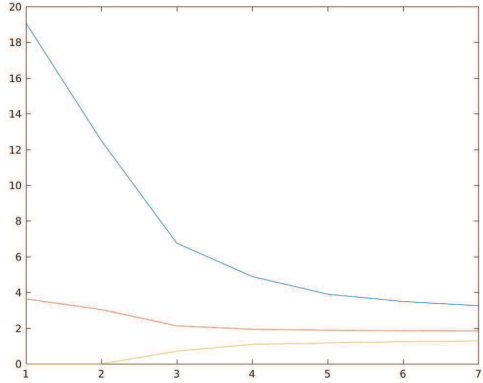} 
\caption{$n=2$: Lebesgue measure of a union of 3 ellipsoids \label{fig:vol3} }
\end{figure}

\subsection{Examples for the Gaussian measure} 

In this section we consider the Gaussian measure $d\mu = \exp(-\frac{\left\|\textbf{x}\right\|^2}{\sigma^2})d\textbf{x}$ 
with variance $\sigma^2 = 0.8$. For each example we have computed two upper-bounds and two lower-bounds for $\mu(\om)$. The first 
(resp. second) upper-bound $\overline{\rho}_d$ (resp. $\overline{\rho}^{Stokes}_d$) 
is obtained by  solving the semidefinite relaxation 
$\textbf{Q}_d$ (resp. $\textbf{Q}^{Stokes}_d$). Similary, the lower-bounds $\underline{\rho}_d$ (resp. $\underline{\rho}^{Stokes}_d$) are obtained from  upper bounds 
for the complement $\R^n\setminus\om$. The respective relative error-gap
are denoted by $\epsilon_d= \frac{\overline{\rho}_d - \underline{\rho}_d}{\overline{\rho}_d}$ and
$\epsilon^{Stokes}_d = \frac{\overline{\rho}^{Stokes}_d - \underline{\rho}^{Stokes}_d}{\overline{\rho}^{Stokes}_d}$.

\begin{ex}
\label{ex-gauss-1}
{\rm 
In this example $\om $ is the union of two ellipsoids.
Let $\om := \om_1 \cup \om_2$ with $\om_1 = \{\textbf{x} \in \mathbb{R}^2 : (\textbf{x} - \textbf{u})^T \textbf{A}_1 (\textbf{x} - \textbf{u}) \le 1\}$ and $\om_2 = \{\textbf{x} \in \mathbb{R}^2 : (\textbf{x} - \textbf{v})^T \textbf{A}_2 (\textbf{x} - \textbf{v}) \le 1\}$ for the values
\[
\begin{array}{ccccc}
\textbf{u} = &(0,0), &(0.1,0.5), &(0.5,0.5)
\end{array}
\]
$\textbf{v} = (1,0)$, \[\textbf{A}_1 = \left[\begin{array}{cc}
1 & 0\\
0 & \frac{1}{4}
\end{array}
\right]
\mbox{ and } \textbf{A}_2 = \left[\begin{array}{cc}
\frac{1}{4} & 0\\
0 & 1
\end{array}
\right]\]
In this case $\rho:=\mu(\om$ can be computed exactly and so 
we have displayed the values of the relative errors denoted by $\epsilon_d = \frac{\overline{\rho}_d - \underline{\rho}}{\overline{\rho}}$
and $\epsilon^{Stokes}_d = \frac{\overline{\rho}^{Stokes}_d - \underline{\rho}}{\overline{\rho}}$ respectively, depending on whether or not we have used Stokes' formula. As one can see in Table \ref{tbl:1} for  a reasonable value $d=10$
the relative error (when using Stokes' formula) is quite good. The respective behaviors are displayed in Figure \ref{gauss-1}.

\begin{table}[h] 
\bgroup
\def\arraystretch{1.5}
\begin{tabular}{|l|c|c|c|}
\hline
& $\textbf{u} = (0,0)$ & $\textbf{u} = (0.1,0.5)$ & $\textbf{u} = (0.5,0.5)$\\
\hline 
$\overline{\rho}_{10}$ & $1.9649$ & $1.9554$ & $1.9484$\\
\hline 
$\underline{\rho}_{10}$ & $1.6129$ & $1.5752$ & $1.5369$\\
\hline
$\epsilon_{10}$ & $18\%$ & $19\%$ & $21\%$ \\
\hline 
$\overline{\rho}^{Stokes}_{10}$ & $1.8571$ & $1.8308$ & $1.8156$\\
\hline 
$\underline{\rho}^{Stokes}_{10}$ & $1.7948$ & $1.7746$ & $1.7618$\\
\hline
$\epsilon^{Stokes}_{10}$ & $3\%$ & $3\%$ & $3\%$ \\
\hline 
\end{tabular}
\egroup
\caption{Example \ref{ex-gauss-1}: Values of $\overline{\rho}_{10}$, $\underline{\rho}_{10}$, $\overline{\rho}_{10}^{Stokes}$, $\underline{\rho}_{10}^{Stokes}$, $\epsilon_{10}$ and $\epsilon_{10}^{Stokes}$ \label{tbl:1}}
\end{table}
\begin{figure}[h]
\centering
\includegraphics[width=0.65\linewidth]{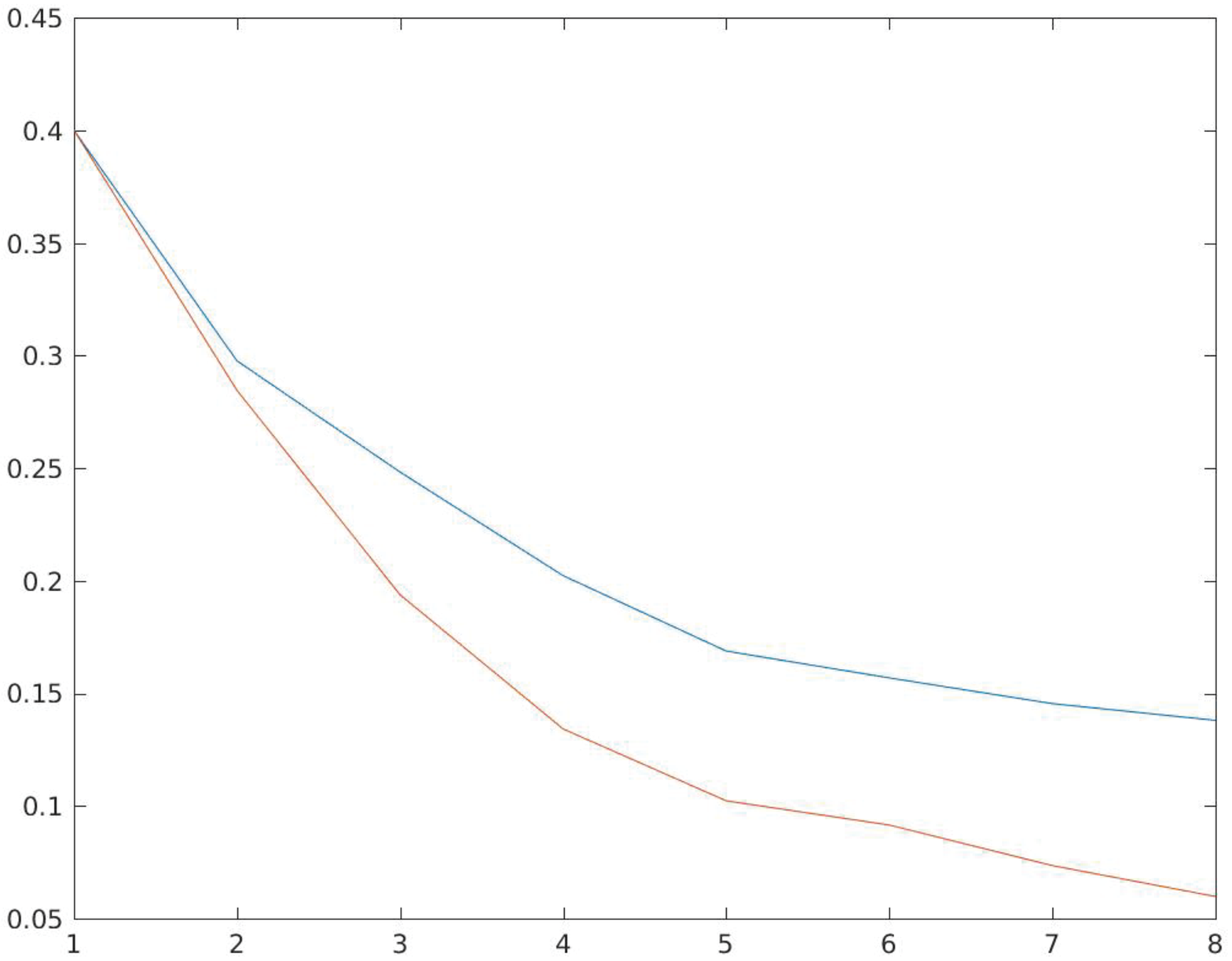}
\caption{Example \ref{ex-gauss-1}: Relative errors $\epsilon_d$ (\textbf{blue}) and $\epsilon_d^{Stokes}$ (\textbf{red})
\label{gauss-1}}
\end{figure}
}\end{ex}
\newpage

\begin{ex}
\label{ex-gauss-2}
{\rm 
Consider $\om = \om_1 \cup \om_2$ with $\om_1 = \{\textbf{x} \in \mathbb{R}^2 : (\textbf{x} - \textbf{u})^T \textbf{A}_1 (\textbf{x} - \textbf{u}) \le 1\}$ and $\om_2 = \{\textbf{x} \in \mathbb{R}^2 : (\textbf{x} - \textbf{v})^T \textbf{A}_2 (\textbf{x} - \textbf{v}) \le 1\}$ for the values 

$
\textbf{u} = (0,0),
$
$\textbf{v} = (-2,0)$, \[\textbf{A}_1 = \left[\begin{array}{cc}
\frac{1}{16} & 0\\
0 & 1
\end{array}
\right]
\mbox{ and } \textbf{A}_2 = \left[\begin{array}{cc}
\frac{1}{4} & \frac{1}{2}\\
\frac{1}{2} & -1
\end{array}
\right]\]
In this case $\om$ is not a compact set as it is unbounded. The results for $d=9$ displayed in Table \ref{tbl:ex2}
show that a good value is already obtained when using Stokes' formula. The respective behaviors of $\epsilon_d$ and $\epsilon^{Stokes}_d$
displayed in Figure \ref{fig:ex2} also show that using Stokes' formula yields a significant improvement.
\begin{table}[h] 
\bgroup
\def\arraystretch{1.5}
\begin{tabular}{|c|c|c|c|c|c|}
\hline 
$\overline{\rho}_{9}$ & $\underline{\rho}_{9}$ & $\epsilon_{9}$ & $\overline{\rho}^{Stokes}_{9}$ & $\underline{\rho}^{Stokes}_{9}$ & $\epsilon^{Stokes}_{9}$\\
\hline
$2.0038$ & $1.8252$ & $8.9\%$ & $1.9347$ & $1.9019$ & $1.7\%$\\ 
\hline
\end{tabular}
\egroup
\caption{Example \ref{ex-gauss-2}: Bounds and relative gap for $d=9$\label{tbl:ex2}}
\end{table}

\begin{figure}[ht]
\includegraphics[width=0.65\columnwidth]{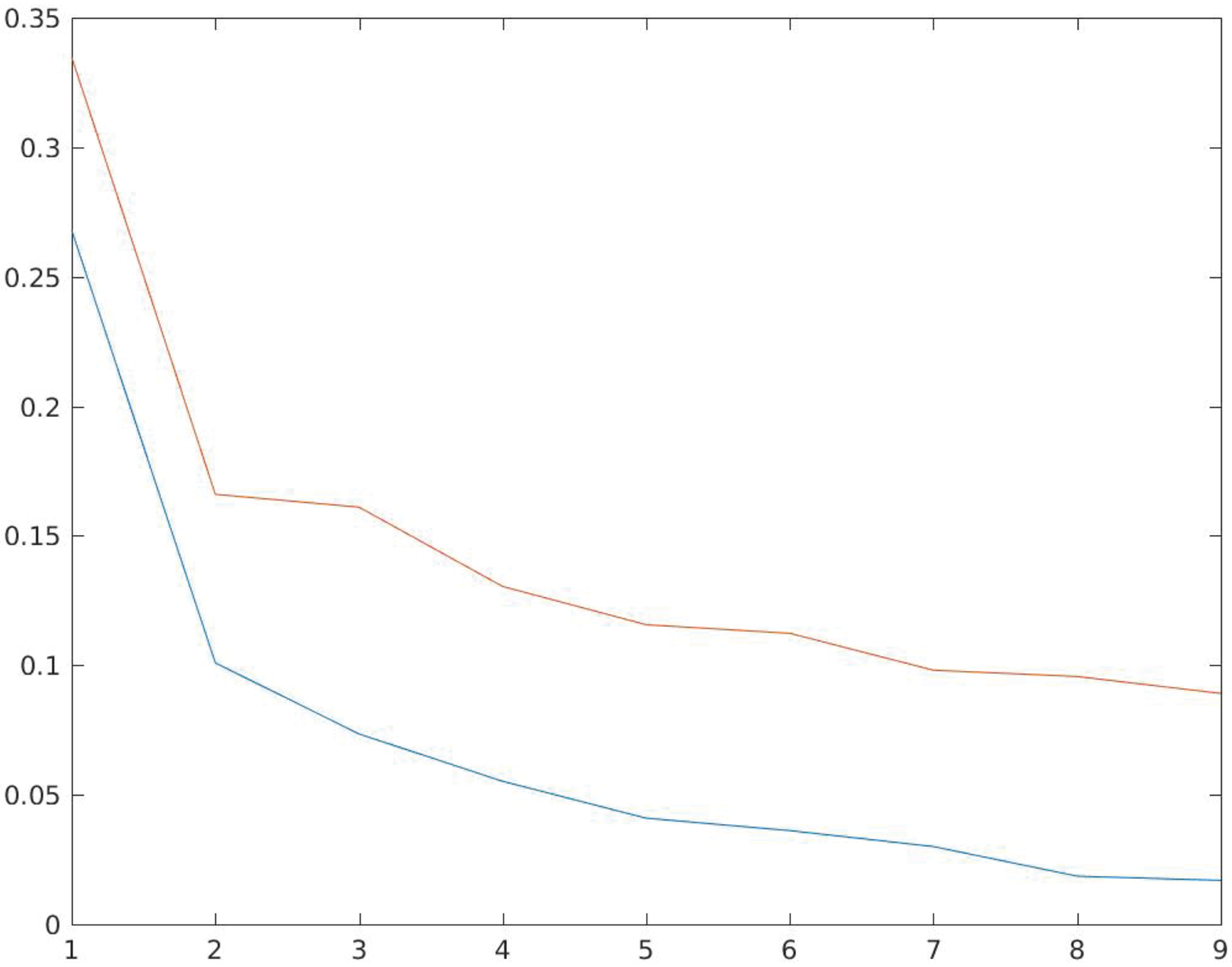} 
\caption{Example \ref{ex-gauss-2}: Relative error $\epsilon_d$ (\textbf{blue}) and $\epsilon^{Stokes}_d$ 
(\textbf{red}) \label{fig:ex2} }
\end{figure}
}\end{ex}

\begin{ex}
\label{ex-gauss-3}
{\rm Consider $\om = \om_1 \cup \om_2$ with $\om_1 = \{\textbf{x} \in \mathbb{R}^2 : (\textbf{x} - \textbf{u})^T \textbf{A}_1 (\textbf{x} - \textbf{u}) \le 1\}$ and $\om_2 = \{\textbf{x} \in \mathbb{R}^2 : (\textbf{x} - \textbf{v})^T \textbf{A}_2 (\textbf{x} - \textbf{v}) \le 1\}$ for the values 

$
\textbf{u} = (0,0),
$
$\textbf{v} = (-2,0)$, \[\textbf{A}_1 = \left[\begin{array}{cc}
-\frac{1}{16} & 0\\
0 & 1
\end{array}
\right]
\mbox{ and } \textbf{A}_2 = \left[\begin{array}{cc}
\frac{1}{4} & \frac{1}{2}\\
\frac{1}{2} & -1
\end{array}
\right]\]
Again $\om$ is not compact. The results in Table \ref{tbl:ex3} and the respective behaviors of $\epsilon_d$ and $\epsilon^{Stokes}_d$
displayed in Figure \ref{fig:ex3} confirm that using Stokes' formula yields a significant improvement.
\begin{table}[h] 
\bgroup
\def\arraystretch{1.5}
\begin{tabular}{|c|c|c|c|c|c|}
\hline 
$\overline{\rho}_{9}$ & $\underline{\rho}_{9}$ & $\epsilon_{9}$ & $\overline{\rho}^{Stokes}_{9}$ & $\underline{\rho}^{Stokes}_{9}$ & $\epsilon^{Stokes}_{9}$\\
\hline
$2.0046$ & $1.8342$ & $8\%$ & $1.9542$ & $1.9083$ & $2\%$\\ 
\hline
\end{tabular}
\egroup
\caption{Example \ref{ex-gauss-3}: Bounds and relative gap for $d=9$ \label{tbl:ex3}}
\end{table}

\begin{figure}[ht]
\includegraphics[width=0.65\columnwidth]{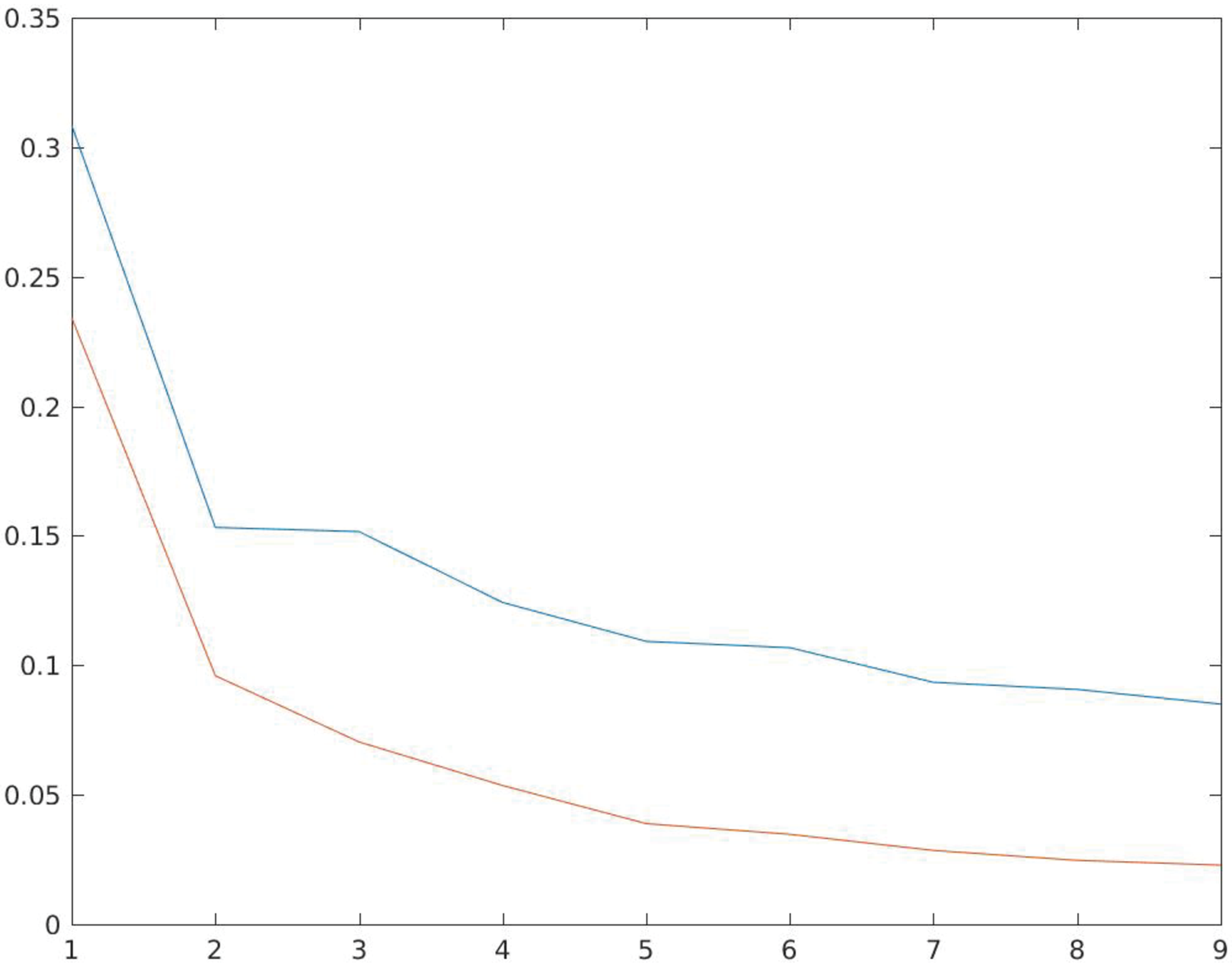} 
\caption{ Example \ref{ex-gauss-3}: Relative errors $\epsilon^{Stokes}_d$ (\textbf{red}) and
$\epsilon_d$ (\textbf{blue})\label{fig:ex3}} 
\end{figure}
}\end{ex}

\begin{ex}
\label{ex-gauss-4}
{\rm We next consider an example in dimension $n=3$. Let $\om_1 = \{\textbf{x} \in \mathbb{R}^3 : (\textbf{x} - \textbf{u})^T \textbf{A}_1 (\textbf{x} - \textbf{u}) \le 1\}$ and $\om_2 = \{\textbf{x} \in \mathbb{R}^2 : (\textbf{x} - \textbf{v})^T \textbf{A}_2 (\textbf{x} - \textbf{v}) \le 1\}$ for the values 

$
\textbf{u} = (0,0,0),
$
$\textbf{v} = (-2,0,-1)$, \[\textbf{A}_1 = \left[\begin{array}{ccc}
-\frac{1}{16} & 0 & 0\\
0 & 1 & 0\\
0 & 0 & \frac{1}{4}
\end{array}
\right]
\mbox{ and } \textbf{A}_2 = \left[\begin{array}{ccc}
\frac{1}{4} & \frac{1}{2} & 0\\
\frac{1}{2} & -1 & 0\\
\frac{1}{4} & \frac{1}{4} & \frac{1}{2}
\end{array}
\right]\]

Results for $d=6$ are displayed in Table \ref{tbl:ex4} and the relative errors $\epsilon_d$ and $\epsilon^{Stokes}_d$  are displayed in  Figure \ref{fig:ex4}.
The quality of results is comparable to that in Examples \ref{ex-gauss-2} and \ref{ex-gauss-3} for $d=6$.
\begin{table}[h] 
\bgroup
\def\arraystretch{1.5}
\begin{tabular}{|c|c|c|c|c|c|}
\hline 
$\overline{\rho}_{6}$ & $\underline{\rho}_{6}$ & $\epsilon_{6}$ & $\overline{\rho}^{Stokes}_{6}$ & $\underline{\rho}^{Stokes}_{6}$ & $\epsilon^{Stokes}_{6}$\\
\hline
$2.8222$ & $2.3123$ & $18\%$ & $2.6856$ & $2.5360$ & $5.6\%$\\ 
\hline
\end{tabular}
\egroup
\caption{Example \ref{ex-gauss-4}: Bounds and relative gap for $d=6$\label{tbl:ex4}}
\end{table}

\begin{figure}[ht]
\includegraphics[width=0.65\columnwidth]{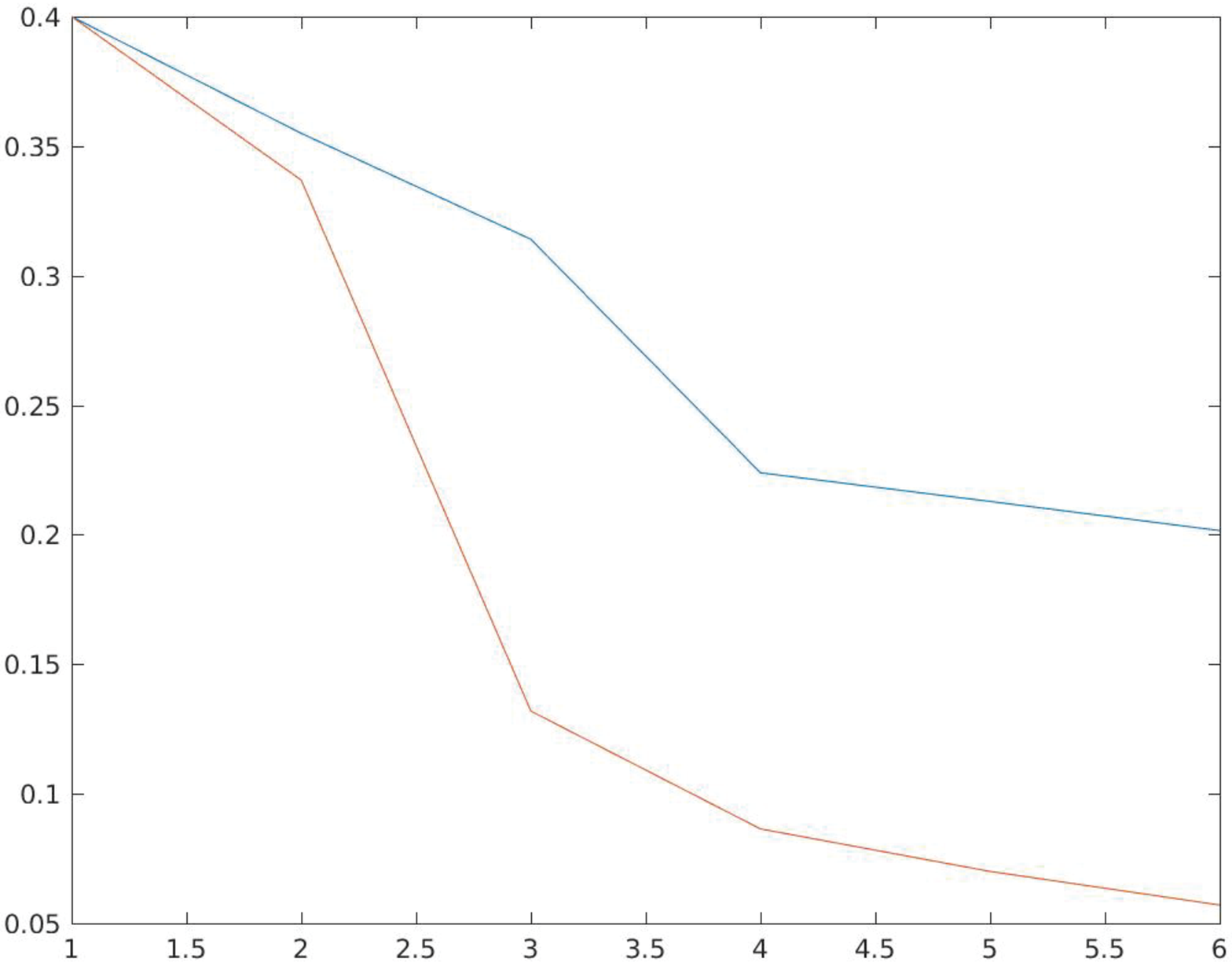} 
\caption{Example \ref{ex-gauss-4}:  Relative error  $\epsilon^{Stokes}_d$ ({\bf red}) and $\epsilon_d$ ({\bf blue}) \label{fig:ex4}}
\end{figure}
}
\end{ex}

\begin{ex}
\label{ex-gauss-5}
{\rm 
Still in dimension $n=3$, let  $\om = \om_1 \cup \om_2$ with $\om_1 = \{\textbf{x} \in \mathbb{R}^3 : \textbf{x}^Te \le 1\}$ and $\om_2 = \{\textbf{x} \in \mathbb{R}^3 : \textbf{x} ^T \textbf{A} \textbf{x}\ \le 1\}$, where 
$e = (1,1,1)$ and \[\textbf{A} = \left[\begin{array}{ccc}
\frac{1}{4} & \frac{1}{2} & 0\\
\frac{1}{2} & -1 & 0\\
\frac{1}{4} & \frac{1}{4} & \frac{1}{2}
\end{array}
\right].\]
The relative errors $\epsilon_d$ and $\epsilon^{Stokes}_d$  are displayed in  Figure \ref{fig:ex5}.
\begin{table}[h] 
\bgroup
\def\arraystretch{1.5}
\begin{tabular}{|c|c|c|c|c|c|}
\hline 
$\overline{\rho}_{7}$ & $\underline{\rho}_{7}$ & $\epsilon_{7}$ & $\overline{\rho}^{Stokes}_{7}$ & $\underline{\rho}^{Stokes}_{7}$ & $\epsilon^{Stokes}_{7}$\\
\hline
$2.8143$ & $2.3494$ & $17\%$ & $2.6887$ & $2.5338$ & $6\%$\\ 
\hline
\end{tabular}
\egroup
\caption{Example \ref{ex-gauss-5}: Bounds and relative gaps for $d=7$
\label{tbl:ex5}}
\end{table}

\begin{figure}[ht]
\includegraphics[width=0.65\columnwidth]{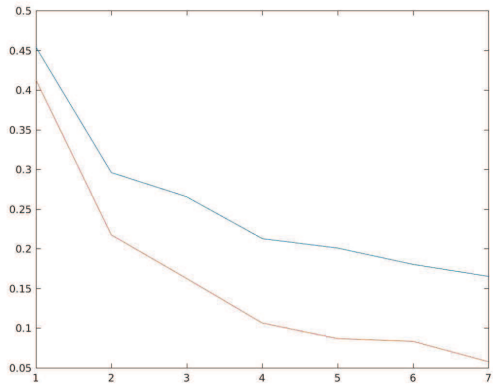} 
\caption{Example \ref{ex-gauss-5}:  Relative error $\epsilon^{Stokes}_d$ ({\bf red}) and $\epsilon_d$ ({\bf blue}) \label{fig:ex5}}
\end{figure}
}\end{ex}

One can see that in all examples quite good approximations are obtained with relatively few moments (up to order $2d\leq 18$ for $n=2$ and 
$2d\leq 14$ for $n=3$) provided that
we use the hierarchy (\ref{primal-newsdp-stokes}) with the additional  moments constraints induced by Stokes' formula. The convergence 
of the hierarchy (\ref{primal-newsdp}) (without those Stokes constraints) is indeed much slower. 

For all the examples that we have treated,
the (crucial) moment and localizing matrices involved in (\ref{primal-newsdp}) and in (\ref{primal-newsdp-stokes}) 
have been expressed
in the canonical basis $(\x^\alpha)_{\alpha\in\N^n}$ of monomials for 
simplicity and easyness of implementation of the SDP relaxations. But this choice is in fact
the worst from a numerical point of view (numerical stability and robustness) which prevented us from
solving (\ref{primal-newsdp}) and (\ref{primal-newsdp-stokes}) for $d\geq 7$ when $n=3$. It is very likely that
the basis of orthonormal polynomials w.r.t. $\mu$ (Legendre for the Lebesgue measure $\mu$ on $[-1,1]$ and
Hermite for the Gaussian measure $\mu$) is a much better (and recommended) choice. Such a more sophisticated implementation was beyond the scope of this paper.

\section*{Conclusion}

In this paper we have provided a numerical scheme to approximate as closely as desired the measure 
$\mu(\om)$ of a finite {\it union} $\om=\cup_{i=1}^p\om_i$ of basic semi-algebraic sets (the case of a single basic semi-algebraic set was treated in \cite{lass-gaussian})).
Surprisingly, even though the case of a union of semi-algebraic sets complicates matters significantly we are still able to adapt the methodology developed in \cite{lass-gaussian}
and provide a monotone non-increasing (resp. non-decreasing) sequence of upper (resp. lower)  bounds that converges to $\mu(\om)$ as the number of moments considered increases. In addition we are also able to use additional moment constraints induced by an appropriate application of Stokes' Theorem 
which permits to improve significantly the convergence. In fact those additional moment constraints are crucial to obtain good bounds rapidly as they 
permit strongly attenuate a Gibbs' phenomenon that otherwise appears.

Our current implementation could be significantly 
improved by using a basis for polynomials more appropriate than the usual canonical basis of monomials 
(the worst choice from a numerical stability point of view). For instance in doing so it should
be possible to implement step $d=8,9$ of the hierarchy in dimension $n=3$, and step $d=7$ for $n=4$. As the convergence
seems to be fast, each additional step of the hierarchy can yield a significant improvement.

The methodology was presented for the Lebesgue measure
$\mu$ when $\om$ is compact and the Gaussian measure for non-compact sets $\om$, but in fact and remarkably, the same methodology works for any measure 
$\mu$ that satisfies Carleman's condition and provided that all its moments are available (or can be computed easily). 

Of course the methodology proposed in this paper  is computationally expensive, especially when compared with Monte-Carlo type methods. 
But the latter provide only an estimate of $\mu(\om)$
and by no means an upper or lower bound on $\mu(\om)$ and therefore these two types of methods should be seen as complementary rather than 
competing.
In its present form it is also limited to small dimension problems (typically $n\leq 3,4$)
because since each upper (or lower) bound requires to solve a semidefinite program whose size increases fast in the hierarchy, one is 
limited by the current efficiency of state-of-the-art semidefinite solvers. However to the best of our knowledge this is the first method that 
provides a sequence of  upper and lower bounds with strong asymptotic guarantees, at least at this level of generality.

\subsection*{Acknowledgement}
Research  funded by by the European Research Council
(ERC) under the European Union's Horizon 2020 research and innovation program
(grant agreement ERC-ADG 666981 TAMING)"


\begin{thebibliography}{las}
\bibitem{handbook}
Anjos M. , Lasserre J.B. (Eds.). {\it Handbook of Semidefinite, Conic and Polynomial Optimization}, Springer, New York, 2012.
\bibitem{Bollobas}
Bollob\'as B. Volume estimates and rapid mixing. In: {\it Flavors of Geometry}, 
MSRI Publications 31, 1997, pp. 151--180.
\bibitem{cousins1}
Cousins B., Vempala S. A cubic algorithm for computing gaussian volume. Proceedings of the 2014
ACM-SIAM Symposium on Discrete Algorithms (SODA14), Portland, January 2014.
\bibitem{cousins2}
Cousins B., Vempala S. A Practical Volume Algorithm, Math. Program. Comput. {\bf 8}, pp. 133--160, 2016.
\bibitem{curto1}
Curto R.E., Fialkow L.A. {\em Flat extensions of positive moment matrices: recursively generated relations}, 
Memoirs. Amer. Math. Soc. {\bf 136}, AMS, Providence, 1998.
\bibitem{curto2}
Curto R.E.,  Fialkow L.A. The truncated K-moment problem in several variables, {\em J. Operator Theory} {\bf 54}, pp.  189--226, 2005.
\bibitem{dunford}
Dunford N., J. Schwartz. {\em Linear Operators. Part I: General Theory}, John Wiley \& Sons, Inc., New York, 1958.
\bibitem{dyer1}
Dyer M.E., Frieze A.M. The complexity of computing the volume of a polyhedron,
SIAM J. Comput. {\bf 17}, pp. 967--974, 1988.
\bibitem{dyer2}
Dyer M.E., Frieze A., Kannan R. A random polynomial-time algorithm for approximating
the volume of convex bodies, J. ACM {\bf 38}, pp. 1--17, 1991.
\bibitem{sirev}
Henrion D., Lasserre J.B., Savorgnan C.
Approximate volume and integration for basic semialgebraic sets,   SIAM Review {\bf 51},  pp. 722--743, 2009.
\bibitem{gloptipoly}
Henrion, Lasserre J.B., Lofberg J.
Gloptipoly 3: moments, optimization and semidefinite programming, Optim. Methods \& Softwares {\bf 24},   pp. 761--779, 2009.
\bibitem{lass-decomp}
Lasserre J.B.. Lebesgue decomposition in action via semidefinite relaxations, Adv. Comput. Math. {\bf 42},  pp. 1129--1148, 2016.
\bibitem{lass-gaussian}
Lasserre J.B.. Computing gaussian and exponential measures of semi-algebraic sets,
{\tt arXiv:1508.06132}, 2015. submitted.
\bibitem{lass-book-icp}
Lasserre J.B. {\em Moments, Positive Polynomials and Their Applications}, Imperial College Press, London, 2010
\bibitem{newlook}
Lasserre J.B. A new look at nonnegativity on closed sets and polynomial optimization, SIAM J. Optim. {\bf 21},  pp. 864--885, 2011.
\bibitem{vempala1}
Lov\'asz L., Vempala S. Simulated annealing in convex bodies and an 
$O^*(n^4)$  volume algorithm. J. Comput. Syst. Sci., {\bf 72}, pp. 392--417, 2006.
\bibitem{niederreiter}
Niederreiter N. {\it Random Number Generation and Quasi-Monte Carlo Methods}, Society for Industrial and Applied Mathematics,
Philadelphia, 1992.
\bibitem{putinar}
Putinar M. Positive polynomials on compact semi-algebraic sets,
Ind. Univ. Math. J. {\bf 42}, pp. 969--984, 1993.
\bibitem{strong}Trnovsk\'a M. Strong duality conditions in semidefinite programming. {\em J.Elec. Eng.} {\bf 56}, pp. 1--5, 2005.
\end{thebibliography}
\end{document}